\documentclass[12pt,a4paper]{article}

\usepackage[a4paper,left=2.5cm,right=2.5cm,
top=2.5cm,bottom=4.5cm]{geometry}

\usepackage{amsmath,amssymb,amsfonts,amsthm}
\usepackage{mathrsfs}
\usepackage{graphicx}
\usepackage{booktabs}
\usepackage{multirow}
\usepackage{xcolor}
\usepackage{enumerate}
\usepackage{appendix}
\usepackage{hyperref}

\newtheorem{theorem}{Theorem}[section]

\newtheorem{lemma}[theorem]{Lemma}

\theoremstyle{definition}

\theoremstyle{remark}

\title{\bfseries
Optimal inequalities involving Casorati curvatures along
Riemannian maps and Riemannian submersions for quaternionic space form
}

\author{
Ravindra Singh\\[2mm]
\small Department of Mathematics, Banaras Hindu University\\
\small Varanasi 221005, India\\
\small Email: \texttt{khandelrs@bhu.ac.in}\\
\small ORCID: 0009-0009-1270-3831
}

\date{}

\begin{document}
\maketitle

\begin{abstract}
\noindent In this paper, we establish Casorati inequalities for Riemannian maps and Riemannian submersions involving quaternionic space forms, and we provide
geometric characterisations of their equality cases. First, we derive
Casorati inequalities for Riemannian maps to quaternionic space forms and
describe the corresponding equality cases, showing that the leaves of the
range space are invariantly quasi-umbilical, and that the associated
shape operator matrix commutes. Next, we obtain Casorati inequalities involving the fundamental tensor fields $T$ and $A$ for Riemannian submersions from quaternionic space forms onto Riemannian manifolds, together with their geometric interpretations. In particular, we prove that the equality case corresponding to the tensor field $A$ along the horizontal distribution is equivalent to the integrability of the horizontal distribution. Moreover, the equality case associated with the tensor field $T$ along the vertical distribution characterises fibres that are invariantly quasi-umbilical with a commuting shape operator matrix. Finally, the simultaneous equality cases involving both tensor fields $T$ and $A$ along the horizontal and vertical distributions imply the integrability of the horizontal distribution together with the invariantly quasi-umbilical nature of the fibres and the commutativity of the corresponding shape operators.
\end{abstract}

\medskip
\noindent\textbf{Keywords:}
Riemannian manifolds; space forms; Casorati curvature;
Riemannian submersions; Riemannian maps.

\medskip
\noindent\textbf{MSC (2020):}
53B20, 53B35, 53C15, 53D15.

\section{Introduction} \label{sec1}
Gaussian curvature is an essential tool in differential geometry that describes the geometry and shape of surfaces as well as their behavior at particular points. However, Gaussian curvature may vanish even on surfaces that are curved. To overcome this drawback, Casorati \cite{Casorati_1890} introduced an extrinsic invariant, known as
{\em Casorati curvature}, for regular surfaces. This curvature disappears only
at planar points and is useful for visualising shapes and appearances
\cite{DHV_2008, OV_2011}. Thereafter, many authors obtained inequalities involving
Casorati curvature, which nowadays are known as {\em Casorati inequalities}, for
submanifolds of different ambient spaces (see
\cite{ALVY, ASJ, DHV_2008, Ghisoiu, Lone_2017a, Vilcu, LLV_2017, LLV_2020, LLV_2022,
Lone_2017, Lone_2019, Lone_2019a, Siddiqui_2018, Zhang_Zhang, Zhang_Pan_Zhang}).
For more details, we refer the reader to \cite{Chen_Survey}.

Riemannian submersions are smooth mappings that play an important role in
various areas of mathematics and theoretical physics, including mechanics,
relativity, spacetime geometry, robotics, supergravity, superstring theory,
Kaluza-Klein theory, and Yang–Mills theory. The notion of a Riemannian map,
introduced by Fischer \cite{Fischer_1992}, naturally extends the concepts of
submanifolds and Riemannian submersions. Riemannian maps possess rich geometric
structures and admit several applications \cite{GRK_book, Sahin_book}. In
particular, they satisfy the eikonal equation, which serves as a link between
geometric optics and physical optics. Moreover, Riemannian maps provide a
useful framework for comparing the geometric properties of the source and
target Riemannian manifolds. Owing to these appealing features, Riemannian
submersions and Riemannian maps have been extensively studied in the presence
of various geometric structures (see \cite{Falcitelli_2004, Sahin_book, Yano}
and the references therein).

Casorati inequalities for Riemannian submersions and Riemannian maps were first
established for real and complex space forms in \cite{LLSV}, and were later
extended to Sasakian space forms in \cite{PLS}. More recently, Singh
{\em et al.} \cite{SMM_2026} derived Casorati inequalities for Riemannian
submersions and Riemannian maps for real, complex, real K\"ahler, generalized
complex, Sasakian, Kenmotsu, cosymplectic, almost $C(\alpha)$, and generalized
Sasakian space forms, along both the vertical and horizontal distributions.
Motivated by these developments, Singh \cite{Singh_2026} further obtained a
Casorati inequality for Riemannian submersions whose total spaces belong to
the above-mentioned classes of space forms, which simultaneously involves
the Casorati curvatures of both the vertical and horizontal distributions. Motivated by the above progress, in the present paper, we establish Casorati
inequalities for Riemannian submersions and Riemannian maps for quaternionic space forms.
These results indicate that Casorati inequalities may provide a natural
link between geometry and mathematical physics. 

This paper is organised as follows. In Section~\ref{sec2}, we present the definition of quaternionic space form and the necessary lemma. In Section~\ref{sec3}, we obtain Casorati inequalities for Riemannian maps to quaternionic space forms. In Section~\ref{sec4}, we present the fundamental tools for Riemannian submersions. In subsection~\ref{sec5}, we derive Casorati inequalities for Riemannian submersions whose total spaces are quaternionic
space forms along the vertical distribution. In subsection~\ref{sec6}, we obtain
Casorati inequalities for Riemannian submersions whose total spaces are
quaternionic space forms along the horizontal distribution. In subection~\ref{sec7}, finally, we establish Casorati inequalities for Riemannian submersions whose
total spaces are quaternionic space forms along both the horizontal and vertical distributions.

\section{Preliminaries} \label{sec2}
\subsection{Quaternionic Space Forms}
Let $\left( M,g\right) $ be a $4m$-dimensional Riemannian manifold equipped
with a $3$-dimensional vector bundle $\gamma $ of tensors of type $\left(
1,1\right) $ with a local basis formed by Hermitian structures $\left\{
J_{1},J_{2},J_{3}\right\} $ such that%
\[
J_{1}\circ J_{2}=-J_{2}\circ J_{1}=J_{3}\quad {\rm and\quad }\nabla
_{X}J_{\alpha }=\sum_{\beta =1}^{3}Q_{\alpha \beta }J_{\beta },\quad \alpha
\in \left\{ 1,2,3\right\} 
\]%
for any vector field $X$, where $\nabla $ is the Levi-Civita connection of $%
g $ and $Q_{\alpha \beta }$ are certain local $1$-forms on $M$ such that $%
Q_{\alpha \beta }+Q_{\beta \alpha }=0$. Then $\left( g,\gamma \right) $ is
said to be a quaternionic Kaehler structure on $M$ and $\left( M,g,\gamma
\right) $ is said to be a quaternionic Kaehler manifold.

Let $\left( M,g,\gamma \right) $ be a quaternionic Kaehler manifold and let $%
X$ be a non null vector field on $M$. Then the $4$-dimensional plane $%
Q\left( X\right) $, spanned by $\left\{ X,J_{1}X,J_{2}X,J_{3}X\right\} $, is
called a quaternionic $4$-plane. Any $2$-plane in $Q\left( X\right) $ is
called a quaternionic plane. The sectional curvature of a quaternionic plane
is called a quaternionic sectional curvature. If the quaternionic sectional
curvatures of a quaternionic Kaehler manifold $\left( M,g,\gamma \right) $
are equal to a real constant $c$, then it is said to be a quaternionic space
form, and is denoted by $M\left( c\right) $. It is well known that a
quaternionic Kaehler manifold is a quaternionic space form $M\left( c\right) 
$ if and only if its Riemann curvature tensor is given by 
\begin{align}
& R\left( {\cal Z}_{1},{\cal Z}_{2},{\cal Z}_{3},{\cal Z}_{4}\right) 
\nonumber \\
& =\frac{c}{4}\{g({\cal Z}_{2},{\cal Z}_{3})g({\cal Z}_{1},{\cal Z}_{4})-g(%
{\cal Z}_{1},{\cal Z}_{3})g({\cal Z}_{2},{\cal Z}_{4})\}  \nonumber \\
& +\frac{c}{4}\sum_{\alpha =1}^{3}\{g({\cal Z}_{1},J_{\alpha }{\cal Z}%
_{3})g\left( J_{\alpha }{\cal Z}_{2},{\cal Z}_{4}\right) -g({\cal Z}%
_{2},J_{\alpha }{\cal Z}_{3})g\left( J_{\alpha }{\cal Z}_{1},{\cal Z}%
_{4}\right) +2g({\cal Z}_{1},J_{\alpha }{\cal Z}_{2})g\left( J_{\alpha }%
{\cal Z}_{3},{\cal Z}_{4}\right) \}.  \label{eq-QSF}
\end{align}%
for all vector fields $X$, $Y$, $Z$, $W$ on $M$ and any local basis $\left\{
J_{1},J_{2},J_{3}\right\} $. For details we refer to \cite{Ishihara_1974}.

\begin{enumerate}
\item In case of Riemannian map, for any ${\cal Z}\in \Gamma (TM_{2})$, we
write 
\begin{equation}
J_{\alpha }{\cal Z}=P_{\alpha }^{{\cal R}}{\cal Z}+Q_{\alpha }^{{\cal R}^{\perp}}%
{\cal Z},  \label{decompose_GCSF_RM}
\end{equation}%
where $P_{\alpha }^{{\cal R}}{\cal Z}\in \Gamma ({\rm range~}F_{\ast })$, $%
Q_{\alpha }^{{\cal R}^{\perp}}{\cal Z}\in \Gamma ({\rm range~}F_{\ast })^{\perp }$
such that 
\begin{equation}
\Vert P_{\alpha }^{{\cal R}}\Vert ^{2}=\sum\limits_{i,j=1}^{s}\left(
g_{2}(F_{\ast }h_{i},J_{\alpha }F_{\ast }h_{j})\right)
^{2}=\sum\limits_{i,j=1}^{r}\left( g_{2}(F_{\ast }h_{i},P_{\alpha }^{{\cal R}%
}F_{\ast }h_{j})\right) ^{2},  \label{eq-PRalpha}
\end{equation}
where $\{h_{1},\ldots,h_{s}\}$ and $\{F_{\ast}h_{1},\ldots,F_{\ast}h_{s}\}$ are bases of $(\ker F_{\ast})^{\perp}$ and ${\rm range}F_{\ast}$, respectively.

\item In the case of Riemannian submersion for any ${\cal Z}\in \Gamma
(TM_{1})$, we write 
\begin{equation}
J_{\alpha }{\cal Z}=P_{\alpha }{\cal Z}+{Q_{\alpha }}{\cal Z},
\label{decompose_GCSF_VRS}
\end{equation}%
where {$P_{\alpha }$}${\cal Z}\in \Gamma ({\rm ker~}F_{\ast })^{\perp }$, {$%
Q_{\alpha }$}${\cal Z}\in \Gamma ({\rm ker~}F_{\ast })$, such that $\Vert ${$%
P_{\alpha }$}$\Vert ^{2}=\sum\limits_{i,j=1}^{s}\left( g_{1}(h_{i},J_{\alpha
}h_{j})\right) ^{2}=\sum\limits_{i,j=1}^{s}\left( g_{1}(h_{i},{P_{\alpha }}%
h_{j})\right) ^{2}$, $\Vert ${$Q_{\alpha }$}$\Vert
^{2}=\sum\limits_{i,j=1}^{\ell }\left( g_{1}(v_{i},J_{\alpha }v_{j})\right)
^{2}=\sum\limits_{i,j=1}^{r}\left( g_{1}(v_{i},{Q_{\alpha }}v_{j})\right)
^{2}$, and $\Vert ${$P_{\alpha }^{{\cal V}}$}$\Vert
^{2}=\sum\limits_{j=1}^{\ell }\sum\limits_{i=1}^{s}\left(
g_{1}(h_{i},P_{\alpha }v_{j})\right) ^{2}$,
\end{enumerate}
where $\{h_{1},\ldots,h_{s}\}$ and $\{v_{1},\ldots,v_{\ell}\}$ are bases of $(\ker F_{\ast})^{\perp}$ and $(\ker F_{\ast})$, respectively.

\begin{lemma}
\label{Lemma_Tripathi} $\cite{Tripathi_2017}$ Let $\Lambda =\{(t_{1},\dots
,t_{n})\in {\Bbb R}^{n}:t_{1}+\cdots +t_{n}=k\}$ be a hyperplane of ${\Bbb R}%
^{n}$, and $f:{\Bbb R}^{n}\rightarrow {\Bbb R}$ be a quadratic form given by 
\[
f\left( t_{1},\dots ,t_{n}\right) =\lambda
_{1}\sum_{i=1}^{n-1}t_{i}^{2}+\lambda _{2}t_{n}^{2}-2\sum_{1\leq i<j\leq
n}t_{i}t_{j},\quad \lambda _{1},\lambda _{2}\in {\Bbb R}^{+}. 
\]%
Then the constrained extremum problem $\min\limits_{(t_{1},\dots ,t_{n})\in
\Lambda }f$ has the global solution $t_{1}=t_{2}=\cdots =t_{n-1}=\frac{k}{%
\lambda _{1}+1},\quad t_{n}=\frac{k}{\lambda _{2}+1}=\frac{k\left(
n-1\right) }{\left( \lambda _{1}+1\right) \lambda _{2}}=\frac{k\left(
\lambda _{1}-n+2\right) }{\lambda _{1}+1}$, provided that $\lambda _{2}=%
\frac{n-1}{\lambda _{1}-n+2}$.
\end{lemma}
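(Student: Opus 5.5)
The plan is to treat this as a convex quadratic minimization on an affine hyperplane: find the unique critical point of $f|_{\Lambda}$ by Lagrange multipliers, then show that the restriction of $f$ to $\Lambda$ is strictly convex. Together these give that the critical point is the unique global minimizer.

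\emph{Critical point.} Set $S=t_1+\cdots+t_n$, so that $S=k$ on $\Lambda$. Rewrite the mixed term as
\[
2\sum_{i<j}t_it_j=S^2-\sum_{i=1}^n t_i^2 .
\]
Then $\partial f/\partial t_i = 2\lambda_1 t_i-2(S-t_i)$ for $i<n$, and $\partial f/\partial t_n=2\lambda_2 t_n-2(S-t_n)$. The Lagrange condition $\nabla f=\mu(1,\dots,1)$, evaluated on $\Lambda$, becomes
\[
(\lambda_1+1)t_i=(\lambda_2+1)t_n=k+\mu/2=:C \quad (i<n).
\]
Substituting into the constraint gives
\[
C\Big(\frac{n-1}{\lambda_1+1}+\frac{1}{\lambda_2+1}\Big)=k .
\]
Under the hypothesis $\lambda_2=\frac{n-1}{\lambda_1-n+2}$ we have $\lambda_2+1=\frac{\lambda_1+1}{\lambda_1-n+2}$. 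Hence the bracket equals $\frac{n-1}{\lambda_1+1}+\frac{\lambda_1-n+2}{\lambda_1+1}=1$, and so $C=k$. This yields $t_1=\cdots=t_{n-1}=\frac{k}{\lambda_1+1}$ and $t_n=\frac{k}{\lambda_2+1}=\frac{k(\lambda_1-n+2)}{\lambda_1+1}$. The remaining equality $t_n=\frac{k(n-1)}{(\lambda_1+1)\lambda_2}$ follows by substituting $\lambda_1-n+2=(n-1)/\lambda_2$. Note that positivity of $\lambda_2$ forces $\lambda_1>n-2$, so all quantities are well defined.

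\emph{Global minimality.} Take two points of $\Lambda$; their difference $x$ satisfies $\sum x_i=0$. Because $f$ is quadratic, it suffices to show that its associated quadratic form is positive definite on the subspace $\{\sum x_i=0\}$. Using the same identity with $\sum x_i=0$, the form evaluated at $x$ is
\[
\lambda_1\sum_{i<n}x_i^2+\lambda_2x_n^2+\sum_{i=1}^n x_i^2=(\lambda_1+1)\sum_{i<n}x_i^2+(\lambda_2+1)x_n^2 .
\]
This is strictly positive for $x\neq0$, since $\lambda_1,\lambda_2>0$.

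Therefore $f|_{\Lambda}$ is a strictly convex quadratic function on an affine space. Its unique critical point is the unique global minimum, which gives the claim. The only delicate point is the algebra verifying $C=k$ from the relation between $\lambda_1$ and $\lambda_2$. Everything else is routine, so I do not expect a genuine obstacle.
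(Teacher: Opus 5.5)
The paper gives no proof of this lemma; it is quoted from \cite{Tripathi_2017} and never re-derived. Your proof is correct and complete. The gradient computation via $2\sum_{i<j}t_it_j=S^2-\sum t_i^2$ is right, and so is the check that the hypothesis gives $\frac{n-1}{\lambda_1+1}+\frac{1}{\lambda_2+1}=1$ and hence $C=k$. The positive definiteness of $(\lambda_1+1)\sum_{i<n}x_i^2+(\lambda_2+1)x_n^2$ on $\{\sum x_i=0\}$ is also right.

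This is the same two-step structure as the standard proof of the lemma in the Casorati literature: find the critical point, then check a second-order condition on the tangent space of $\Lambda$. There it is typically phrased through Oprea's constrained-extremum principle on a Riemannian submanifold. Because $\Lambda$ is a totally geodesic hyperplane, the second-fundamental-form term in that principle vanishes, and your strict-convexity argument is exactly its elementary content.

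Your two steps can be merged into a single identity: for $t\in\Lambda$,
\[
f(t)=(\lambda_1+1)\sum_{i=1}^{n-1}\Bigl(t_i-\frac{k}{\lambda_1+1}\Bigr)^2+(\lambda_2+1)\Bigl(t_n-\frac{k}{\lambda_2+1}\Bigr)^2 .
\]
You can check it by expanding and using $\sum t_i=k$ together with $\frac{n-1}{\lambda_1+1}+\frac{1}{\lambda_2+1}=1$. This gives the minimizer and its uniqueness at once. It also shows that the minimum value is $0$, i.e.\ $f\geq 0$ on $\Lambda$, which is the form in which the lemma is actually used to derive Casorati inequalities.
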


\section{Casorati inequalities for quaternionic space form to\ Riemannian
maps} \label{sec3}

\label{sec_2}In this section we derive optimal inequalities involving
Casorati curvature for Riemannian map whose target space form is
quaternionic space form, discuss their equality cases, which shows
invariantly quasi umblicity of leaves of range spaces and shape operator matrix commutes.

Let $\pi :(N_{1},g_{1})\rightarrow (N_{2},g_{2})$ be a Riemannian map
between two Riemannian manifolds, where $\dim N_{1}=n$, $\dim N_{2}=n_{2}$.
For each pont $p\in N_{1}$, its derivative map $\pi _{\ast
p}:T_{p}N_{1}\rightarrow T_{\pi (p)}N_{2}$, and tangent space $T_{p}N_{1}$
can be decomposed as direct sum of two orthogonal distribution $(\ker \pi
_{\ast p})$ and $(\ker \pi _{\ast p})^{\perp }$ such that

\[
T_{p}N_{1}=(\ker \pi _{\ast p})\oplus (\ker \pi _{\ast p})^{\perp }. 
\]%
Similarly we decompose at $\pi (p)\in N_{2}$, 
\[
T_{\pi (p)}N_{2}=({\rm range}{\text{ }}\pi _{\ast p})\oplus ({\rm range}{%
\text{ }}\pi _{\ast p})^{\perp }. 
\]%
For all $X,Y\in \Gamma (\ker \pi _{\ast })^{\perp }$ the following
equation holds \cite{Fischer_1992}: 
\begin{equation}
g_{1}(X,Y)=g_{2}(\pi _{\ast }X,\pi _{\ast }Y).  \label{hor_range_isometry}
\end{equation}

\noindent The bundle {\rm Hom}$\left( T{N_{1}},\pi ^{-1}T{N_{2}}\right) $
admits an induced connection $\nabla $ from the Levi-Civita connection $%
\nabla ^{{N_{1}}}$ on {$N$}${_{1}}$. Then symmetric {\it second fundamental
form} of $\pi $ is given by \cite{Nore_1986} 
\[
\left( \nabla \pi _{\ast }\right) \left( X,Y\right) =\nabla _{\pi_{\ast}X}^{N_{2}
}\pi _{\ast }(Y)-\pi _{\ast }\left( \nabla _{X}^{{N_{1}}}Y\right) 
\]%
for $X,Y\in \Gamma \left(TN_{1}\right)$, where $\nabla
^{M_{1}}$ and $\nabla^{M_{2}}$ is the Levi-Civita connections on $M_{1}$ and ${M_{2}}$, respectively. In addition, by \cite{Sahin_2010} $%
\left( \nabla \pi _{\ast }\right) \left( X,Y\right) $ is completely
contained in $\left( {\rm range~}\pi _{\ast }\right) ^{\perp }${\rm \ }for
all$\ X,Y\in \Gamma \left( \ker \pi _{\ast }\right) ^{\perp }$. Furthermore,
the Gauss equation for $\pi $ is defined as \cite[p. 189]{Sahin_book} 
{\small 
\begin{eqnarray}
g_{2}\left( R^{{N_{2}}}\left( \pi _{\ast }W_{1},\pi _{\ast }W_{2}\right) \pi
_{\ast }W_{3},\pi _{\ast }W_{4}\right) &=&g_{1}\left( R^{{N_{1}}}\left(
W_{1},W_{2}\right) W_{3},W_{4}\right)  \nonumber \\
&&+g_{2}\left( \left( \nabla \pi _{\ast }\right) \left( W_{1},W_{3}\right)
,\left( \nabla \pi _{\ast }\right) \left( W_{2},W_{4}\right) \right) 
\nonumber \\
&&-g_{2}\left( \left( \nabla \pi _{\ast }\right) \left( W_{1},W_{4}\right)
,\left( \nabla \pi _{\ast }\right) \left( W_{2},W_{3}\right) \right) ,
\label{Gauss_Codazzi_RM}
\end{eqnarray}%
} where $W_{i}\in \Gamma \left( \ker \pi _{\ast }\right) ^{\perp }$. Here, $%
R^{{N_{1}}}$ is the curvature tensor of $N_{1}$ and $R^{{N_{2}}}$ be the
curvature tensors of {$N$}${_{2}}$. \newline

Let $\left\{ h_{1},\ldots ,h_{s}\right\} $ be a orthonormal basis of $\left(
\ker F_{\ast p}\right) ^{\perp }$ and $\left\{ h_{s+1},\ldots
,h_{n_{1}}\right\} $ be a orthonormal basis of $\left( \ker F_{\ast
p}\right) $, then the scalar curvatures defined by 
\begin{equation}
2{\tau }_{N_{1}}^{{\cal H}}=\sum\limits_{i,j=1}^{s}g_{1}\left( R^{{N_{1}}%
}(h_{i},h_{j})h_{j},h_{i}\right) ,\ 2{\tau }_{N_{2}}^{{\cal R}%
}=\sum\limits_{i,j=1}^{s}g_{2}\left( R^{{N_{2}}}(F_{\ast }h_{i},F_{\ast
}h_{j})F_{\ast }h_{j},F_{\ast }h_{i}\right)  \label{eq-scalH}
\end{equation}%
Consequently, we define the normalized scalar curvatures for horizontal
space 
\begin{equation}
\rho ^{{\cal H}}=\frac{2{\tau }_{N_{1}}^{{\cal H}}}{s\left( s-1\right) }%
,\quad \rho ^{{\cal R}}=\frac{2{\tau }_{N_{2}}^{{\cal R}}}{s\left(
s-1\right) }.  \label{eq-NSCALHR}
\end{equation}%
Supposing $\{V_{s+1},\dots ,V_{n_{2}}\}$ an orthonormal basis of $\left( 
{\rm range~}\pi _{\ast }\right) ^{\perp }$ we set, 
\begin{eqnarray*}
B_{ij}^{{\cal H}^{\alpha }} &=&g_{2}\left( (\nabla \pi _{\ast
})(h_{i},h_{j}),V_{\alpha }\right) ,\quad i,j=1,\dots ,s,\quad \alpha
=s+1,\dots ,n_{2}, \\
\left\Vert B^{{\cal H}}\right\Vert ^{2} &=&\sum_{i,j=1}^{s}g_{2}\left(
(\nabla \pi _{\ast })(h_{i},h_{j}),(\nabla \pi _{\ast })(h_{i},h_{j})\right)
, \\
{\rm trace\,}B^{{\cal H}} &=&\sum_{i=1}^{s}(\nabla \pi _{\ast })\left(
h_{i},h_{i}\right) , \\
\left\Vert {\rm trace\,}B^{{\cal H}}\right\Vert ^{2} &=&g_{2}\left( {\rm %
trace\,}B^{{\cal H}},{\rm trace\,}B^{{\cal H}}\right) \text{.}
\end{eqnarray*}%
Then the {\it Casorati curvature} of the horizontal space is defined as 
\[
C^{{\cal H}}=\frac{1}{s}\left\Vert B^{{\cal H}}\right\Vert ^{2}=\frac{1}{s}%
\sum_{\alpha =s+1}^{n_{2}}\sum_{i,j=1}^{s}\left( B_{ij}^{{\cal H}^{\alpha
}}\right) ^{2}. 
\]%
Let $L^{{\cal H}}$ be a $k$ dimensional subspace ($k\geq 2$) of the
horizontal space with orthonormal basis $\{h_{1},\dots ,h_{k}\}$. Then its
Casorati curvature $C^{L^{{\cal H}}}$ is given by 
\[
C^{L^{{\cal H}}}=\frac{1}{k}\sum_{\alpha =s+1}^{n_{2}}\sum_{i,j=1}^{k}\left(
B_{ij}^{{\cal H}^{\alpha }}\right) ^{2}. 
\]%
Moreover, the normalized $\delta ^{{\cal H}}${\it -Casorati curvatures} $%
\delta _{C}^{{\cal H}}(s-1)$ and $\hat{\delta}_{C}^{{\cal H}}(s-1)$
associated with the horizontal space at a point $p$ are given by 
\begin{equation}
\left[ \delta _{C}^{{\cal H}}(s-1)\right] _{p}=\frac{1}{2}C_{p}^{{\cal H}}+%
\frac{\left( s+1\right) }{2s}\inf \{C^{L^{{\cal H}}}|L^{{\cal H}}\ {\rm a\
hyperplane\ of\ }(\ker \pi _{\ast {p}})^{\perp }\},  \label{CC_H_RM_1}
\end{equation}%
and 
\begin{equation}
\left[ \hat{\delta}_{C}^{{\cal H}}(s-1)\right] _{p}=2C_{p}^{{\cal H}}-\frac{%
\left( 2s-1\right) }{2s}\sup \{C^{L^{{\cal H}}}|L^{{\cal H}}\ {\rm a\
hyperplane\ of\ }(\ker \pi _{\ast {p}})^{\perp }\}.  \label{CC_H_RM_2}
\end{equation}

\begin{lemma}
\label{General_Inq_Thm_RM} {\rm \cite[Singh et al. 2026]{SMM_2026}} Let $F:({%
M_{1}}^{m_{1}},g_{1})\rightarrow \left( {M_{2}}^{m_{2}},g_{2}\right) $ be a
Riemannian map between Riemannian manifolds with rank $r\geq 3$. Then 
\begin{equation}
\rho ^{{\cal H}}\leq \delta _{C}^{{\cal H}}(s-1)+\rho ^{{\cal R}}\quad {\rm %
and}\quad \rho ^{{\cal H}}\leq \hat{\delta}_{C}^{{\cal H}}(s-1)+\rho ^{{\cal %
R}}.  \label{eq-GIRMCASORATI}
\end{equation}%
The equality holds in any of the above two inequalities at a point $p\in {%
M_{1}}$ if and only if, for suitable orthonormal bases, the following hold. 
\[
B_{11}^{{\cal H}^{\alpha }}=B_{22}^{{\cal H}^{\alpha }}=\cdots
=B_{s-1\,s-1}^{{\cal H}^{\alpha }}=\frac{1}{2}B_{ss}^{{\cal H}^{\alpha }},
\]%
\[
B_{ij}^{{\cal H}^{\alpha }}=0,\quad 1\leq i\neq j\leq s.
\]%
The equality conditions can be interpreted as follows. The first condition
gives \\
$g_{2}(F_{\ast }h_{1},{\cal S}_{V_{\alpha }}F_{\ast
}h_{1})=g_{2}(F_{\ast }h_{2},{\cal S}_{V_{\alpha }}F_{\ast }h_{2})=\cdots
=g_{2}(F_{\ast }h_{s-1},{\cal S}_{V_{\alpha }}F_{\ast }h_{s-1})=\frac{1}{2}%
g_{2}(F_{\ast }h_{s},{\cal S}_{V_{\alpha }}F_{\ast }h_{s})$ with respect to
all directions $(V_{\alpha },\text{where}~\alpha \in \{s+1,\dots ,m_{2}\})$.
Equivalently, there exist $(m_{2}-s)$ mutually orthogonal unit vector fields
in $\left( {\rm range~}F_{\ast }\right) ^{\perp }$ such that shape operators
with respect to all directions have an eigenvalue of multiplicity $(s-1)$
and that for each $V_{\alpha }$ the distinguished eigendirections are the
same (namely $F_{\ast }h_{s}$). Hence, the leaves of range spaces are
invariantly quasi-umbilical {\rm \cite{DHV_2008}}. The second condition
gives $g_{2}(F_{\ast }h_{j},{\cal S}_{V_{\alpha }}F_{\ast }h_{i})=0$ with
respect to all directions $(V_{\alpha },\text{where}~\alpha \in \{s+1,\dots
,m_{2}\}$ in $\left( {\rm range~}F_{\ast }\right) ^{\perp })$. Equivalently,
the shape operator matrices become diagonal, and hence commute.
\end{lemma}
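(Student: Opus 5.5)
The plan is to follow the Oprea / Lee--Lee--Vîlcu optimization approach, using the Gauss equation (\ref{Gauss_Codazzi_RM}) for the Riemannian map and the constrained-extremum Lemma~\ref{Lemma_Tripathi}.

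Fix $p\in M_1$ and choose orthonormal bases $\{h_1,\dots,h_s\}$ of $(\ker F_{\ast p})^{\perp}$ and $\{V_{s+1},\dots,V_{m_2}\}$ of $({\rm range}\,F_{\ast p})^{\perp}$. Putting $W_1=W_4=h_i$, $W_2=W_3=h_j$ in (\ref{Gauss_Codazzi_RM}) and summing over $i,j$ gives
\[
2\tau^{\mathcal R}_{M_2}=2\tau^{\mathcal H}_{M_1}+\|B^{\mathcal H}\|^2-\|{\rm trace}\,B^{\mathcal H}\|^2 .
\]
Using (\ref{eq-NSCALHR}) and $\|B^{\mathcal H}\|^2=sC^{\mathcal H}$, this becomes
\[
s(s-1)\rho^{\mathcal H}=s(s-1)\rho^{\mathcal R}+\|{\rm trace}\,B^{\mathcal H}\|^2-sC^{\mathcal H}.
\]
The inequality $\rho^{\mathcal H}\le\delta^{\mathcal H}_C(s-1)+\rho^{\mathcal R}$ is then equivalent to the nonnegativity of a quadratic polynomial
\[
\mathcal P=\tfrac{s(s-1)}{2}C^{\mathcal H}+\tfrac{(s-1)(s+1)}{2}C^{L^{\mathcal H}}-\|{\rm trace}\,B^{\mathcal H}\|^2+sC^{\mathcal H}
\]
in the components $B^{\mathcal H^\alpha}_{ij}$, where $L^{\mathcal H}$ is an arbitrary hyperplane. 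Since $L^{\mathcal H}$ is arbitrary, we may assume it is spanned by $h_1,\dots,h_{s-1}$.

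The next step is to expand $\mathcal P$ in components. The off-diagonal terms $B^{\mathcal H^\alpha}_{ij}$ ($i\ne j$) appear only with nonnegative coefficients. For each fixed $\alpha$, the diagonal part is the quadratic form
\[
f_\alpha=\lambda_1\sum_{i=1}^{s-1}t_i^2+\lambda_2t_s^2-2\sum_{i<j}t_it_j,\qquad t_i=B^{\mathcal H^\alpha}_{ii}.
\]
Up to a positive multiple it has $\lambda_1=\tfrac{2s}{s-1}+\dots$ type constants; explicitly $\lambda_1=s$, $\lambda_2=\tfrac{s-1}{2}$ after normalizing. These satisfy the compatibility condition $\lambda_2=\tfrac{n-1}{\lambda_1-n+2}$ with $n=s$. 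On the hyperplane $\sum t_i=k_\alpha$, Lemma~\ref{Lemma_Tripathi} gives a global minimum at $t_1=\dots=t_{s-1}=\tfrac{k_\alpha}{\lambda_1+1}$, $t_s=\tfrac{k_\alpha}{\lambda_2+1}$, where $f_\alpha=0$. Hence $\mathcal P\ge 0$. Taking the infimum over $L^{\mathcal H}$ yields the first inequality.

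For $\hat\delta$ the argument is identical. The coefficient $\tfrac{s+1}{2s}$ is replaced by $-\tfrac{2s-1}{2s}$ and $\tfrac12$ by $2$, and $\sup$ replaces $\inf$. One obtains another admissible pair $(\lambda_1,\lambda_2)$, and Lemma~\ref{Lemma_Tripathi} applies again.

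For equality, $\mathcal P=0$ forces all off-diagonal $B^{\mathcal H^\alpha}_{ij}$ ($i\ne j$) to vanish. It also forces the diagonal entries to sit at the extremal point, i.e.\ $B_{11}^{\mathcal H^\alpha}=\dots=B_{s-1\,s-1}^{\mathcal H^\alpha}=\tfrac12 B_{ss}^{\mathcal H^\alpha}$, since the ratio $(\lambda_1+1)/(\lambda_2+1)$ equals $2$. Conversely, these conditions make $\mathcal P=0$ and realize the extremal hyperplane.

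For the geometric interpretation, write $B^{\mathcal H^\alpha}_{ij}=g_2(F_\ast h_i,\mathcal S_{V_\alpha}F_\ast h_j)$. Vanishing off-diagonals mean the shape operators are simultaneously diagonal, hence commute. The diagonal relation says each $\mathcal S_{V_\alpha}$ has an eigenvalue of multiplicity $s-1$ with common distinguished eigendirection $F_\ast h_s$, i.e.\ invariant quasi-umbilicity.

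The main obstacle I expect is the bookkeeping in the second step. There I must check carefully that the coefficients of $\mathcal P$ produce $\lambda_1,\lambda_2$ satisfying the hypothesis of Lemma~\ref{Lemma_Tripathi}, and that the resulting minimizer gives exactly the ratio $\tfrac12$. If the constants do not match, I would instead complete squares directly, grouping $\sum_{i<j\le s-1}(t_i-t_j)^2$ together with $(t_1+\dots+t_{s-1}-t_s)^2$-type terms.
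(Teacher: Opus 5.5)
Your derivation of both inequalities is sound and follows the standard route. The paper gives no proof of its own; it quotes the lemma from \cite{SMM_2026}, and Lemma~\ref{Lemma_Tripathi} is the tool behind it. The summed Gauss equation gives $s(s-1)\rho^{\mathcal H}=s(s-1)\rho^{\mathcal R}+\|\mathrm{trace}\,B^{\mathcal H}\|^2-sC^{\mathcal H}$. For $\delta_C$ the diagonal part of $\mathcal P$ has $\lambda_1=s$, $\lambda_2=\frac{s-1}{2}$, which is admissible and yields $B^{\mathcal H^\alpha}_{ii}=\frac12B^{\mathcal H^\alpha}_{ss}$ for $i<s$.

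The gap is your assertion that the $\hat\delta_C$ case is identical \emph{including the equality analysis}. There $\mathcal P=s(2s-1)C^{\mathcal H}-\frac{(2s-1)(s-1)}{2}C^{L^{\mathcal H}}-\|\mathrm{trace}\,B^{\mathcal H}\|^2$, whose diagonal part has $\lambda_1=\frac{2s-3}{2}$, $\lambda_2=2(s-1)$. This pair is admissible, so the inequality does hold. But now $(\lambda_1+1)/(\lambda_2+1)=\frac12$ rather than $2$. The minimizer is $t_1=\dots=t_{s-1}=\frac{2k}{2s-1}$, $t_s=\frac{k}{2s-1}$, i.e.\ $B^{\mathcal H^\alpha}_{11}=\dots=B^{\mathcal H^\alpha}_{s-1\,s-1}=2B^{\mathcal H^\alpha}_{ss}$.

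A transparent way to see both cases also supplies the uniqueness of the minimizer that your ``forces'' step needs. On $\sum_it_i=k$ one has $f_\alpha=(\lambda_1+1)\sum_{i<s}t_i^2+(\lambda_2+1)t_s^2-k^2$, and admissibility is exactly $\frac{s-1}{\lambda_1+1}+\frac{1}{\lambda_2+1}=1$. Cauchy--Schwarz then gives $f_\alpha\ge0$, with equality iff $(\lambda_1+1)t_i=(\lambda_2+1)t_s$ for all $i<s$.

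Consequently the equality characterization as stated, with the factor $\frac12$ for both inequalities, is false for the $\hat\delta_C$ inequality, so no argument can prove it. For instance, an isometric immersion is a Riemannian map with $\ker F_\ast=0$. Take a hypersurface of $\mathbb{R}^{s+1}$ whose shape operator at $p$ is $\mathrm{diag}(a,\dots,a,2a)$ with $a\neq0$, e.g.\ the graph of $\frac a2(x_1^2+\dots+x_{s-1}^2)+ax_s^2$ at the origin. The stated conditions hold there. Yet $\rho^{\mathcal R}=0$, $\rho^{\mathcal H}=\frac{(s+2)a^2}{s}$, $C^{\mathcal H}=\frac{(s+3)a^2}{s}$ and $\sup C^{L^{\mathcal H}}=\frac{(s+2)a^2}{s-1}$ give $\hat\delta^{\mathcal H}_C(s-1)+\rho^{\mathcal R}-\rho^{\mathcal H}=\frac{3(s-2)a^2}{2s(s-1)}>0$.

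What your method actually establishes is ratio $\frac12$ for $\delta_C$ and ratio $2$ for $\hat\delta_C$. The geometric reading is the same in both cases: invariant quasi-umbilicity with a common distinguished eigendirection $F_\ast h_s$, and diagonal, hence commuting, shape operators. In the equality step you should also note that the infimum (resp.\ supremum) over hyperplanes is attained, by compactness of the Grassmannian. Equality then means $\mathcal P(L_0)=0$ for an extremal hyperplane $L_0$.
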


\begin{theorem}
Let $\pi :({N}_{1}^{n_{1}},g_{1})\rightarrow \left( N_{2}%
^{n_{2}},g_{2}\right) $ be a Riemannian map from an $n_{1}$-dimensional
Riemannian manifold {$N$}${_{1}}$ to an $n_{2}$-dimensional quaternionic
space form with rank $s\geq 3$. Then 
\begin{equation}
\rho ^{{\cal H}}\leq \delta _{C}^{{\cal H}}(s-1)+\frac{c}{4}+\frac{3c}{%
4s\left( s-1\right) }\sum_{\alpha =1}^{3}\left\Vert P_{\alpha }^{{\cal R}%
}\right\Vert ^{2}\quad {\rm and}\quad \rho ^{{\cal H}}\leq \hat{\delta}_{C}^{%
{\cal H}}(s-1)+\frac{c}{4}+\frac{3c}{4s\left( s-1\right) }\sum_{\alpha
=1}^{3}\left\Vert P_{\alpha }^{{\cal R}}\right\Vert ^{2}.
\label{Cas_qut_Inq_Eq_RM}
\end{equation}%
The equality case follows Lemma \ref{General_Inq_Thm_RM}.
\end{theorem}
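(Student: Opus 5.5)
The plan is to reduce the statement to Lemma~\ref{General_Inq_Thm_RM}. That lemma already gives
\[
\rho^{\cal H}\le \delta_C^{\cal H}(s-1)+\rho^{\cal R},\qquad \rho^{\cal H}\le \hat\delta_C^{\cal H}(s-1)+\rho^{\cal R},
\]
with the stated equality characterisation. So it suffices to compute the normalized scalar curvature $\rho^{\cal R}$ of the range space when $N_2$ is a quaternionic space form. Concretely, I would prove
\[
2\tau^{\cal R}_{N_2}=\frac{c}{4}\,s(s-1)+\frac{3c}{4}\sum_{\alpha=1}^3\|P^{\cal R}_\alpha\|^2 ,
\]
and then divide by $s(s-1)$ using \eqref{eq-NSCALHR}. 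Substituting this into \eqref{eq-GIRMCASORATI} gives both inequalities in \eqref{Cas_qut_Inq_Eq_RM}. The equality case is inherited verbatim from Lemma~\ref{General_Inq_Thm_RM}, because $\rho^{\cal R}$ is an exact identity and contributes no further slack.

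For the computation I would take the orthonormal basis $\{F_*h_1,\dots,F_*h_s\}$ of ${\rm range}\,F_*$. This is orthonormal by \eqref{hor_range_isometry}. I would then evaluate \eqref{eq-QSF} at ${\cal Z}_1={\cal Z}_4=F_*h_i$ and ${\cal Z}_2={\cal Z}_3=F_*h_j$, and sum over $i,j=1,\dots,s$.

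1. The first bracket gives $\frac{c}{4}\big(\delta_{jj}\delta_{ii}-\delta_{ij}^2\big)$. Summed, this is $\frac{c}{4}(s^2-s)$.
2. In the second bracket, each term is a product of two factors of the form $g_2(F_*h_a,J_\alpha F_*h_b)$. I would use the skew-symmetry of each Hermitian $J_\alpha$, namely $g(X,J_\alpha Y)=-g(J_\alpha X,Y)$ and $g(X,J_\alpha X)=0$, to rewrite every such term as $\pm\big(g_2(F_*h_i,J_\alpha F_*h_j)\big)^2$ or as zero.
3. By \eqref{eq-PRalpha}, the resulting sums over $i,j$ are exactly $\|P^{\cal R}_\alpha\|^2$. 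In this step, the decomposition \eqref{decompose_GCSF_RM} justifies replacing $J_\alpha$ by $P^{\cal R}_\alpha$ inside inner products with range vectors.

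The only delicate step is the sign bookkeeping in item 2. I expect it to go as follows. The term $g({\cal Z}_1,J_\alpha{\cal Z}_3)\,g(J_\alpha{\cal Z}_2,{\cal Z}_4)$ becomes $g(h_i,J_\alpha h_j)\,g(J_\alpha h_j,h_i)$, which is $-(g(h_i,J_\alpha h_j))^2$. The term $-g({\cal Z}_2,J_\alpha{\cal Z}_3)\,g(J_\alpha{\cal Z}_1,{\cal Z}_4)$ contains $g(h_j,J_\alpha h_j)=0$ and so vanishes. The last term, $2g({\cal Z}_1,J_\alpha{\cal Z}_2)\,g(J_\alpha{\cal Z}_3,{\cal Z}_4)$, becomes $2g(h_i,J_\alpha h_j)\,g(J_\alpha h_j,h_i)$, which is $-2(\cdot)^2$.

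With the paper's convention that the sectional curvature is $R(X,Y,Y,X)$, these combine to $\pm 3\frac{c}{4}\|P^{\cal R}_\alpha\|^2$. I would fix the overall sign by checking consistency with the standard positive holomorphic contribution $+3\|P\|^2$ in the quaternionic case. This yields the coefficient $\frac{3c}{4s(s-1)}$, which I would then substitute as described above.
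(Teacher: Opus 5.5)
Your reduction is the same as the paper's proof. You evaluate \eqref{eq-QSF} on the orthonormal frame $\{F_*h_1,\dots,F_*h_s\}$ of ${\rm range}\,F_*$ to get $2\tau^{\cal R}_{N_2}=\frac{c}{4}s(s-1)+\frac{3c}{4}\sum_{\alpha}\|P^{\cal R}_\alpha\|^2$, divide by $s(s-1)$, and substitute into Lemma~\ref{General_Inq_Thm_RM}. Your remark that the equality case carries over unchanged, because $\rho^{\cal R}$ is given by an identity, is also correct.

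The flaw is in item~2, which is the only real computation in the proof. The product $g_2(F_*h_i,J_\alpha F_*h_j)\,g_2(J_\alpha F_*h_j,F_*h_i)$ is not $-\bigl(g_2(F_*h_i,J_\alpha F_*h_j)\bigr)^2$. Since $g_2$ is symmetric, both factors are the same real number, so the product is $+\bigl(g_2(F_*h_i,J_\alpha F_*h_j)\bigr)^2$. If you move $J_\alpha$ across by skew-symmetry you get $-g_2(F_*h_j,J_\alpha F_*h_i)$, and skew-symmetry again turns this into $+g_2(F_*h_i,J_\alpha F_*h_j)$; you dropped that second sign. Skew-symmetry is needed only to kill the middle term, via $g_2(F_*h_j,J_\alpha F_*h_j)=0$.

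Hence each pair $(i,j)$ contributes $\frac{c}{4}(1+0+2)\bigl(g_2(F_*h_i,J_\alpha F_*h_j)\bigr)^2$. By \eqref{eq-PRalpha} this sums to $+\frac{3c}{4}\|P^{\cal R}_\alpha\|^2$, directly from \eqref{eq-QSF}. With your signs, the same formula would give $-\frac{3c}{4}\sum_\alpha\|P^{\cal R}_\alpha\|^2$. Choosing the sign afterwards ``by consistency with the standard holomorphic contribution'' is not a derivation, and here it would contradict the formula you are evaluating. Once the sign is computed correctly, no adjustment is needed. A valid sanity check is $Y=J_1X$ for a unit vector $X$: only $\alpha=1$ contributes, and the sectional curvature is $\frac{c}{4}(1+3)=c$, as it should be.
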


\begin{proof}
At a point $p\in ${$N$}${_{1}}$, let $\left\{
h_{1},\dots ,h_{s}\right\} $, $\{\pi _{\ast }h_{1},\dots ,\pi _{\ast
}h_{s}\} $ and $\{V_{s+1},\dots ,V_{n_{2}}\}$ be orthonormal bases for $%
(\ker \pi _{\ast p})^{\perp }$, $({\rm range}~\pi _{\ast \pi(p)})$ and $({\rm range}%
~\pi _{\ast \pi(p)})^{\perp }$, respectively. Then by putting $W_{1}=W_{4}=h_{i}$
and $W_{2}=W_{3}=h_{j}$ in (\ref{eq-QSF}), (\ref{hor_range_isometry}) and (%
\ref{decompose_GCSF_RM}), we obtain 
\begin{eqnarray*}
&&\sum\limits_{i,j=1}^{s}R^{N_{2}}(F_{\ast }h_{i},F_{\ast }h_{j},F_{\ast
}h_{j},F_{\ast }h_{i}) \\
&=&\sum\limits_{i,j=1}^{s}\frac{c}{4}%
\{g_{1}(h_{j},h_{j})g_{1}(h_{i},h_{i})-g_{1}(h_{i},h_{j})g_{1}(h_{j},h_{i})\}
\\
&&+\sum_{\alpha =1}^{3}\sum\limits_{i,j=1}^{s}\frac{c}{4}\left\{ g_{2}\left(
\pi _{\ast }h_{i},P_{\alpha }^{{\cal R}}\pi _{\ast }h_{j}\right) g_{2}\left(
P_{\alpha }^{{\cal R}}\pi _{\ast }h_{j},\pi _{\ast }h_{i}\right)
-g_{2}\left( \pi _{\ast }h_{j},P_{\alpha }^{{\cal R}}\pi _{\ast
}h_{j}\right) g_{2}\left( P_{\alpha }^{{\cal R}}\pi _{\ast }h_{i},\pi _{\ast
}h_{i}\right) \right. \\
&&\left. +2g_{2}\left( \pi _{\ast }h_{i},P_{\alpha }^{{\cal R}}\pi _{\ast
}h_{j}\right) g_{2}\left( P_{\alpha }^{{\cal R}}\pi _{\ast }h_{j},\pi _{\ast
}h_{i}\right) \right\}
\end{eqnarray*}%
From (\ref{eq-scalH}) and (\ref{eq-PRalpha}), we obtain%
\begin{equation}
2{\tau }_{N_{2}}^{{\cal R}}=\frac{c}{4}s\left( s-1\right) +\frac{3c}{4}%
\sum_{\alpha =1}^{3}\left\Vert P_{\alpha }^{{\cal R}}\right\Vert ^{2}
\label{eq-GCI-(5)}
\end{equation}%
By using (\ref{eq-NSCALHR}),we obtain%
\begin{equation}
\rho ^{{\cal R}}=\frac{c}{4}+\frac{3c}{4s\left( s-1\right) }\sum_{\alpha
=1}^{3}\left\Vert P_{\alpha }^{{\cal R}}\right\Vert ^{2}.  \label{eq-GCI-(6)}
\end{equation}%
In view of (\ref{eq-GCI-(6)}) and (\ref{eq-GIRMCASORATI}), we get (\ref%
{Cas_qut_Inq_Eq_RM}).
\end{proof} 

\section{Riemannian submersion} \label{sec4}

Let $\left( N_{1},g_{1}\right) $ and $\left( N_{2},g_{2}\right) $ be two
Riemannian manifolds of dimension $n_{1}$ and $n_{2}$, respectively. A
surjective smooth map $F:\left( N_{1}^{n_{1}},g_{1}\right) \rightarrow
\left( N_{2}^{n_{2}},g_{2}\right) $ is called a {\it Riemannian submersion} if
its differential map (that is, $F_{\ast p}:T_{p}N_{1}\rightarrow
T_{F(p)}N_{2}$) is surjective at all $p\in N_{1}$ and $F_{\ast p}$ preserves
the length of all horizontal vectors at $p$.

Here, the tangent vectors to the fibers (fiber means $\{F^{-1}(q)\;:\ q\in
N_{2}\}$) is a submanifold of $N_{1}$ with $\dim (F^{-1}(q))=r=n_{1}-n_{2}$, are
vertical, while the orthogonal vectors to the fibers are horizontal.

Thus $TN_{1}$, the decomposition is a direct sum of two distributions: the
vertical distribution ${\cal V}=\ker F_{\ast }$ and its orthogonal
complement (known as the horizontal distribution) ${\cal H}=\left( \ker
F_{\ast }\right) ^{\perp }$.

Similarly, for each $p\in N_{1}$ the vertical and horizontal spaces in $%
T_{p}N_{1}$ are denoted by ${\cal V}_{p}=\left( \ker F_{\ast }\right) _{p}$
and ${\cal H}_{p}=\left( \ker F_{\ast }\right) _{p}^{\perp }$, respectively.

\subsubsection*{O'Neill tensors}

The geometry of Riemannian submersions is characterised by O'Neill's tensors 
{${\cal T}$} and {${\cal A}$} defined for vector fields $E$, $F$ on $N_{1}$
by 
\begin{eqnarray*}
{\cal T}\left( E,F\right) &=&{{\cal T}_{E}F=h\nabla _{vE}vF+v\nabla _{vE}hF}
\\
{\cal A}\left( E,F\right) &=&{\cal A}_{E}F=v{\nabla _{hE}hF+h\nabla _{hE}vF,}
\end{eqnarray*}%
where $\nabla $ is the Levi-Civita connection on $g_{1}$, $h$ and $v$ are
projection morphisms of $E$, $F\in TN_{1}$ to $\left( \ker F_{\ast }\right)
^{\perp }$ and $\ker F_{\ast }$, respectively.

The O'Neill tensors also satisfy: 
\[
{\cal A}_{Y_{1}}Y_{2}=-{\cal A}_{Y_{2}}Y_{1},\quad {\cal T}_{U_{1}}U_{2}=%
{\cal T}_{U_{2}}U_{1}, 
\]%
and 
\[
g_{1}\left( {\cal T}_{E}F,G\right) =-g_{1}\left( F,{\cal T}_{E}G\right)
,\quad g_{1}\left( {\cal A}_{E}F,G\right) =-g_{1}\left( F,{\cal A}%
_{E}G\right) , 
\]%
where $U_{1},U_{2}\in \ker F_{\ast }$, $Y_{1},Y_{2},Y_{3}\in \left( \ker
F_{\ast }\right) ^{\perp }$ and $E$, \thinspace $F$, $G\in TN_{1}$ \cite%
{Neill_1966}.

\subsubsection*{Relations between Riemannian curvature tensors}

Let $R^{N_{1}}$, $R^{N_{2}}$, $R^{\ker F_{\ast }}$, and $R^{\left( \ker
F_{\ast }\right) ^{\perp }}$ denote the Riemannian curvature tensors
corresponding to $N_{1}$, $N_{2}$, $\ker F_{\ast }$, and $\left( \ker
F_{\ast }\right) ^{\perp }$, respectively. Then, we have 
\begin{eqnarray}
R^{N_{1}}\left( F_{1},F_{2},F_{3},F_{4}\right)
&=&R^{\ker F_{\ast}}\left( F_{1},F_{2},F_{3},F_{4}\right)
-g_{1}(T_{F_{1}}F_{4},T_{F_{2}}F_{3})  \nonumber \\
&&+g_{1}(T_{F_{2}}F_{4},T_{F_{1}}F_{4}),  \label{Gauss_Codazz_RS_T}
\end{eqnarray}%
\begin{eqnarray}
R^{N_{1}}\left( X_{1},X_{2},X_{3},X_{4}\right) &=&R^{\left( \ker F_{\ast
}\right) ^{\perp }}\left( X_{1},X_{2},X_{3},X_{4}\right) +2g_{1}\left(
A_{X_{1}}X_{2},A_{X_{3}}X_{4}\right)  \nonumber \\
&&-g_{1}\left( A_{X_{2}}X_{3},A_{X_{1}}X_{4}\right) +g_{1}\left(
A_{X_{1}}X_{3},A_{X_{2}}X_{4}\right)  \label{Gauss_Codazz_RS_A}
\end{eqnarray}%
\begin{eqnarray}
R^{N_{1}}\left( X_{1},F_{1},X_{2},F_{2}\right) &=&g_{1}\left( \left( \nabla
_{X_{1}}^{1}{\cal T}\right) \left( F_{1},F_{2}\right) ,X_{2}\right)
+g_{1}\left( \left( \nabla _{F_{1}}^{1}{\cal A}\right) \left(
X_{1},X_{2}\right) ,F_{2}\right)  \nonumber \\
&&-g_{1}\left( {\cal T}_{F_{1}}X_{1},{\cal T}_{F_{2}}X_{2}\right)
+g_{1}\left( {\cal A}_{X_{2}}F_{2},{\cal A}_{X_{1}}F_{1}\right) ,
\label{eq-P-(12)}
\end{eqnarray}%
for all $X_{1},X_{2},X_{3},X_{4}\in \left( \ker F_{\ast }\right) ^{\perp }$\
and $F_{1},F_{2},F_{3},F_{4}\in \ker F_{\ast }$. Here, $\nabla ^{1}$ is the
Levi-Civita connection with respect to the metric $g_{1}$ \cite%
{Falcitelli_2004,Neill_1966}.

\subsection{Casorati inequalities from quaternionic space form for vertical
distributions of Riemannian submersions} \label{sec5}

Let $\left\{ v_{1},\ldots ,v_{\ell }\right\} $ and $\left\{ h_{1},\ldots
,h_{s}\right\} $ be orthonormal bases of $\left( \ker F_{\ast p}\right) $
and $\left( \ker F_{\ast p}\right) ^{\perp }$ at point $p\in N_{1}$. Then
the scalar curvature defined by 
\begin{equation}
2{\tau }_{{\cal V}}^{\ker F_{\ast }}=\sum\limits_{i,j=1}^{\ell }R^{\ker F_{\ast}}\left(v_{i},v_{j},v_{j},v_{i}\right) ,\quad 2{\tau }_{{\cal V}%
}^{N_{1}}=\sum\limits_{i,j=1}^{\ell }R^{N_{1}}\left(
v_{i},v_{j},v_{j},v_{i}\right) .  \label{eq-scalV}
\end{equation}%
Consequently, we define the normalized scalar curvature as 
\begin{equation}
\rho _{{\cal V}}^{\ker F_{\ast }}=\frac{2{\tau }_{{\cal V}}^{\ker F_{\ast }}%
}{\ell \left( \ell -1\right) },\quad \rho _{{\cal V}}^{N_{1}}=\frac{2{\tau }%
_{{\cal V}}^{N_{1}}}{\ell \left( \ell -1\right) }  \label{eq-Nscal}
\end{equation}%
Also, we set 
\begin{eqnarray*}
T_{ij}^{{\cal H}^{\alpha }} &=&g_{N_{1}}\left( T_{v_{i}}v_{j},h_{\alpha
}\right) ,\quad i,j=1,\dots ,\ell ,\quad \alpha =1,\dots ,s, \\
\left\Vert T^{{\cal H}}\right\Vert ^{2} &=&\sum_{i,j=1}^{\ell }g_{1}\left(
T_{v_{i}}v_{j},T_{v_{i}}v_{j}\right) , \\
{\rm trace~}T^{{\cal H}} &=&\sum_{i=1}^{\ell }T_{v_{i}}v_{i}, \\
\left\Vert {\rm trace~\,}T^{{\cal H}}\right\Vert ^{2} &=&g_{1}\left( {\rm %
trace}~T^{{\cal H}},{\rm trace}~T^{{\cal H}}\right) .
\end{eqnarray*}%
Then the {\it Casorati curvature} of the vertical space is defined as 
\[
C^{{\cal V}}=\frac{1}{\ell }\left\Vert T^{{\cal H}}\right\Vert ^{2}=\frac{1}{%
\ell }\sum_{\alpha =1}^{s}\sum_{i,j=1}^{\ell }\left( T_{ij}^{{\cal H}%
^{\alpha }}\right) ^{2}. 
\]

Let $L^{{\cal V}}$ be a $k$ dimensional subspace ($k\geq 2$) of vertical
space with orthonormal basis $\{v_{1},\dots ,v_{k}\}$. Then its Casorati
curvature $C^{L^{{\cal V}}}$ is given by 
\[
C^{L^{{\cal V}}}=\frac{1}{k}\sum_{\alpha =1}^{s}\sum_{i,j=1}^{k}\left(
T_{ij}^{{\cal H}^{\alpha }}\right) ^{2}. 
\]%
Moreover, the normalized $\delta ^{{\cal V}}${\it -Casorati curvatures} $%
\delta _{C}^{{\cal V}}(\ell -1)$ and $\hat{\delta}_{C}^{{\cal V}}(\ell -1)$
associated with the vertical space at a point $p$ are given by 
\[
\left[ \delta _{C}^{{\cal V}}(\ell -1)\right] _{p}=\frac{1}{2}C_{p}^{{\cal V}%
}+\frac{\left( \ell +1\right) }{2\ell }\inf \{C^{L^{{\cal V}}}|L^{{\cal V}}\ 
{\rm a\ hyperplane\ of\ }(\ker F_{\ast {p}})\}, 
\]%
and 
\[
\left[ \hat{\delta}_{C}^{{\cal V}}(\ell -1)\right] _{p}=2C_{p}^{{\cal V}}-%
\frac{\left( 2\ell -1\right) }{2\ell }\sup \{C^{L^{{\cal V}}}|L^{{\cal V}}\ 
{\rm a\ hyperplane\ of\ }(\ker F_{\ast {p}})\}. 
\]

\begin{lemma}
\label{General_Inq_Thm_VD_RS} {\rm \cite[Singh et al. 2026]{SMM_2026}} Let $\pi :({N}_{1},g_{1})\rightarrow \left( {N}%
_{2},g_{2}\right) $ be a Riemannian submersion between Riemannian manifolds
with vertical space of dimension $\ell \geq 3$, where $\dim N_{1}=n_{1}$ and $%
\dim N_{2}=n_{2}$. Then 
\begin{equation}
\rho _{{\cal V}}^{\ker F_{\ast }}\leq \delta _{C}^{{\cal V}}\left(
\ell -1\right) +{\rho }_{{\cal V}}^{N_{1}}\quad {\rm and}\quad \rho _{{\cal V}%
}^{\ker F_{\ast }}\leq \hat{\delta}_{C}^{{\cal V}}\left( \ell -1\right) +{\rho }%
_{{\cal V}}^{N_{1}}.  \label{eq-GCIV-(1a)}
\end{equation}%
Moreover, the equality holds in any of the above two inequalities at a point 
$p\in {N_{1}}$ if and only if for suitable orthonormal bases, the following
hold 
\[
T_{11}^{{\cal H}^{\alpha }}=T_{22}^{{\cal H}^{\alpha }}=\cdots =T_{\ell
-1\,\ell -1}^{{\cal H}^{\alpha }}=\frac{1}{2}T_{\ell \ell }^{{\cal H}%
^{\alpha }}, 
\]%
\[
T_{ij}^{{\cal H}^{\alpha }}=0,\quad 1\leq i\neq j\leq \ell . 
\]%
The equality conditions can be interpreted as follows. The first condition
gives $g_{1}(v_{1},T_{v_{1}}h_{\alpha })=g_{1}(v_{2},T_{v_{2}}h_{\alpha
})=\cdots =g_{1}(v_{\ell -1},T_{v_{\ell -1}}h_{\alpha })=\frac{1}{2}%
g_{1}(v_{\ell},T_{v_{\ell}}h_{\alpha })$ with respect to all horizontal directions 
$(h_{\alpha },\text{where}~\alpha \in \{1,\dots ,s\})$. Equivalently, there
exist $(m_{1}-\ell )$ mutually orthogonal horizontal unit vector fields such
that the shape operator with respect to all directions has an eigenvalue of
multiplicity $(\ell -1)$ and that for each $h_{\alpha }$ the distinguished
eigendirections are the same (namely $v_{\ell}$). Hence, the leaves of vertical
space (called fibers of $F$) are invariantly quasi-umbilical \cite{DHV_2008}%
. The second condition gives $g_{1}(v_{j},T_{v_{i}}h_{\alpha })=0$ with
respect to all horizontal directions $(h_{\alpha },\text{where}~\alpha \in
\{1,\dots ,s\})$. Equivalently, the shape operator matrices become diagonal,
and hence commute.
\end{lemma}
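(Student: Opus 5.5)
The plan is to view each fibre $F^{-1}(q)$ as a submanifold of $N_{1}$ whose second fundamental form is ${\cal T}$ restricted to vertical vectors. We then run the standard Casorati optimization argument, as in the proof of Lemma~\ref{General_Inq_Thm_RM} in \cite{SMM_2026}, with $B^{{\cal H}}$ replaced by $T^{{\cal H}}$.

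\textbf{Step 1: curvature identity.} Fix $p\in N_{1}$ and orthonormal bases $\{v_{1},\dots ,v_{\ell }\}$ of $\ker F_{\ast p}$ and $\{h_{1},\dots ,h_{s}\}$ of $(\ker F_{\ast p})^{\perp }$. In (\ref{Gauss_Codazz_RS_T}) put $F_{1}=F_{4}=v_{i}$ and $F_{2}=F_{3}=v_{j}$, reading it as the Gauss equation of the fibre, and sum over $i,j$. Using the symmetry ${\cal T}_{U_{1}}U_{2}={\cal T}_{U_{2}}U_{1}$ and (\ref{eq-scalV}), this should give
\[
2\tau _{{\cal V}}^{\ker F_{\ast }}=2\tau _{{\cal V}}^{N_{1}}+\left\Vert {\rm trace}\,T^{{\cal H}}\right\Vert ^{2}-\ell C^{{\cal V}}.
\]
In components this is
\[
2\tau _{{\cal V}}^{\ker F_{\ast }}=2\tau _{{\cal V}}^{N_{1}}+\sum_{\alpha =1}^{s}\Big[\Big(\sum_{i}T_{ii}^{{\cal H}^{\alpha }}\Big)^{2}-\sum_{i,j}\big(T_{ij}^{{\cal H}^{\alpha }}\big)^{2}\Big].
\]

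\textbf{Step 2: the auxiliary polynomial for $\delta _{C}^{{\cal V}}$.} Let $L^{{\cal V}}$ be any hyperplane of $\ker F_{\ast p}$, and choose the basis so that $L^{{\cal V}}={\rm span}\{v_{1},\dots ,v_{\ell -1}\}$. Define
\[
{\cal P}=\tfrac{1}{2}\ell (\ell -1)C^{{\cal V}}+\tfrac{1}{2}(\ell +1)(\ell -1)C^{L^{{\cal V}}}-2\tau _{{\cal V}}^{\ker F_{\ast }}+2\tau _{{\cal V}}^{N_{1}}.
\]
Substituting Step 1 and expanding in the $T_{ij}^{{\cal H}^{\alpha }}$ splits ${\cal P}$ into two pieces. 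The first is a sum of nonnegative multiples of the off-diagonal squares $(T_{ij}^{{\cal H}^{\alpha }})^{2}$ with $i\neq j$, which we simply drop. The second is, for each $\alpha $, a quadratic form
\[
f_{\alpha }=\lambda _{1}\sum_{i=1}^{\ell -1}\big(T_{ii}^{{\cal H}^{\alpha }}\big)^{2}+\lambda _{2}\big(T_{\ell \ell }^{{\cal H}^{\alpha }}\big)^{2}-2\sum_{i<j}T_{ii}^{{\cal H}^{\alpha }}T_{jj}^{{\cal H}^{\alpha }}.
\]
Normalizing appropriately, we expect $\lambda _{1}=\ell $ and $\lambda _{2}=\frac{\ell -1}{2}$ up to an overall factor.

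\textbf{Step 3: minimize and conclude.} Minimize each $f_{\alpha }$ on the hyperplane $\sum_{i}T_{ii}^{{\cal H}^{\alpha }}=k_{\alpha }$ using Lemma~\ref{Lemma_Tripathi}. This requires the compatibility condition $\lambda _{2}=\frac{\ell -1}{\lambda _{1}-\ell +2}$, which holds for $\lambda _{1}=\ell $ and $\lambda _{2}=\frac{\ell -1}{2}$. At the critical point $T_{ii}^{{\cal H}^{\alpha }}=\frac{k_{\alpha }}{\ell +1}$ for $i<\ell $ and $T_{\ell \ell }^{{\cal H}^{\alpha }}=\frac{2k_{\alpha }}{\ell +1}$, the value is $0$, so ${\cal P}\geq 0$. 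Dividing by $\ell (\ell -1)$ gives
\[
\rho _{{\cal V}}^{\ker F_{\ast }}\leq \tfrac{1}{2}C^{{\cal V}}+\tfrac{\ell +1}{2\ell }C^{L^{{\cal V}}}+\rho _{{\cal V}}^{N_{1}}.
\]
Taking the infimum over all hyperplanes $L^{{\cal V}}$ yields the first inequality in (\ref{eq-GCIV-(1a)}).

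\textbf{Step 4: the $\hat{\delta }_{C}^{{\cal V}}$ inequality.} For the second inequality we use
\[
{\cal P}^{\prime }=2\ell (\ell -1)C^{{\cal V}}-\tfrac{1}{2}(\ell -1)(2\ell -1)C^{L^{{\cal V}}}-2\tau _{{\cal V}}^{\ker F_{\ast }}+2\tau _{{\cal V}}^{N_{1}},
\]
apply the same argument, and take the supremum over $L^{{\cal V}}$.

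\textbf{Step 5: equality case.} Equality forces every dropped off-diagonal term to vanish, i.e. $T_{ij}^{{\cal H}^{\alpha }}=0$ for $i\neq j$. It also forces each $f_{\alpha }$ to attain its minimum, which by Lemma~\ref{Lemma_Tripathi} gives $T_{11}^{{\cal H}^{\alpha }}=\dots =T_{\ell -1\,\ell -1}^{{\cal H}^{\alpha }}=\tfrac{1}{2}T_{\ell \ell }^{{\cal H}^{\alpha }}$. The converse is a direct substitution. For the geometric reading, note that $g_{1}(T_{v_{i}}v_{j},h_{\alpha })=-g_{1}(v_{j},T_{v_{i}}h_{\alpha })$. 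Hence the conditions say exactly that the shape operators $T_{\cdot }h_{\alpha }$ are simultaneously diagonal, and therefore commute. They also have a common eigenvalue of multiplicity $\ell -1$ with the same distinguished eigendirection $v_{\ell }$, so the fibres are invariantly quasi-umbilical in the sense of \cite{DHV_2008}.

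\textbf{Main obstacle.} The main difficulty is the bookkeeping in Steps 2 and 4. One must verify that, after expansion, the coefficients of the diagonal terms really produce $\lambda _{1},\lambda _{2}$ satisfying the compatibility condition of Lemma~\ref{Lemma_Tripathi}, and that the off-diagonal coefficients are nonnegative. For ${\cal P}^{\prime }$ these constants differ, and I would recheck them carefully. I would also fix the sign convention of (\ref{Gauss_Codazz_RS_T}) by checking it against the classical Gauss equation before carrying out the expansion.
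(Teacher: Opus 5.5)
Your derivation of the two \emph{inequalities} is correct and is the standard argument. The paper gives no proof of this lemma; it only quotes \cite{SMM_2026}, and Lemma~\ref{Lemma_Tripathi} is recorded in the preliminaries for exactly this purpose.

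Your Step~1 identity is right once the last term of (\ref{Gauss_Codazz_RS_T}), which is misprinted, is read as $g_1({\cal T}_{F_2}F_4,{\cal T}_{F_1}F_3)$. For ${\cal P}$ the expansion gives exactly $\lambda_1=\ell$ and $\lambda_2=\frac{\ell-1}{2}$, with every off-diagonal coefficient equal to $\ell+1$. So the $\delta_C^{{\cal V}}$ inequality comes out as you say. Its equality case, $T_{11}^{{\cal H}^{\alpha}}=\cdots=T_{\ell-1\,\ell-1}^{{\cal H}^{\alpha}}=\frac{1}{2}T_{\ell\ell}^{{\cal H}^{\alpha}}$ with $T_{ij}^{{\cal H}^{\alpha}}=0$ for $i\neq j$, also comes out as you say.

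For the ``only if'' direction you should add two facts. First, the infimum over hyperplanes is attained, because the set of hyperplanes is compact. Second, the minimiser is unique: on $\sum_i t_i=k$ one has $f=(\lambda_1+1)\sum_{i<\ell}t_i^2+(\lambda_2+1)t_\ell^2-k^2$, which is strictly convex.

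The genuine gap is in Steps~4--5, exactly at the computation you postponed. Expanding ${\cal P}^{\prime}$ gives, for each $\alpha$, the form of Lemma~\ref{Lemma_Tripathi} with $\lambda_1=\frac{2\ell-3}{2}$ and $\lambda_2=2(\ell-1)$. The off-diagonal coefficients are $\frac{2\ell-1}{2}$ and $2(2\ell-1)$. Compatibility holds, since $\lambda_1-\ell+2=\frac{1}{2}$, so the $\hat{\delta}_C^{{\cal V}}$ inequality itself is fine. But the minimiser is now
\[
T_{ii}^{{\cal H}^{\alpha}}=\frac{2k_{\alpha}}{2\ell-1}\quad (i<\ell),\qquad T_{\ell\ell}^{{\cal H}^{\alpha}}=\frac{k_{\alpha}}{2\ell-1},
\]
that is, $T_{11}^{{\cal H}^{\alpha}}=\cdots=T_{\ell-1\,\ell-1}^{{\cal H}^{\alpha}}=2\,T_{\ell\ell}^{{\cal H}^{\alpha}}$, not $\frac{1}{2}T_{\ell\ell}^{{\cal H}^{\alpha}}$. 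So your Step~5 conclusion is wrong for the second inequality. In fact the lemma's claim that the same conditions characterise equality in \emph{either} inequality cannot be proved, because it is false.

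Here is a counterexample. Take $\ell=3$ and a single nonzero horizontal component $T=\mathrm{diag}(a,a,2a)$ with $a\neq 0$. This is realised by $(x,t)\mapsto t$ for the metric $dt^2+\sum_{i=1}^{3}e^{2\phi_i(t)}dx_i^2$ with suitable $\phi_i^{\prime}(0)$.
\begin{itemize}
\item Then $\rho_{{\cal V}}^{\ker F_{\ast}}-\rho_{{\cal V}}^{N_1}=\frac{5}{3}a^2=\delta_C^{{\cal V}}(2)$.
\item However $C^{{\cal V}}=2a^2$ and $\sup_L C^{L^{{\cal V}}}=\frac{5}{2}a^2$, attained at ${\rm span}\{v_1,v_3\}$.
\item Hence $\hat{\delta}_C^{{\cal V}}(2)=\frac{23}{12}a^2>\frac{5}{3}a^2$, so the second inequality is strict even though the stated conditions hold.
\end{itemize}

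The correct equality case for the $\hat{\delta}_C^{{\cal V}}$ inequality is $T_{ij}^{{\cal H}^{\alpha}}=0$ for $i\neq j$, together with $T_{11}^{{\cal H}^{\alpha}}=\cdots=T_{\ell-1\,\ell-1}^{{\cal H}^{\alpha}}=2T_{\ell\ell}^{{\cal H}^{\alpha}}$. You should state and prove this corrected version rather than the printed one. The geometric reading survives with the new ratio: the shape operators are diagonal, hence commute, and have an eigenvalue of multiplicity $\ell-1$ with common distinguished direction $v_\ell$. So the fibres are still invariantly quasi-umbilical.
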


\begin{theorem}
Let $\pi:(${$N$}${_{1}},J_{\alpha },g_{1})\rightarrow \left( {N_{2}}%
,g_{2}\right) $ be a Riemannian submersion from $n_{1}$-dimensional
quaternionic space form onto a $n_{2}$-dimensional Riemannian manifold with
vertical space of dimension $\ell \geq 3$. Then 
\begin{equation}
\rho _{{\cal V}}^{\ker F_{\ast }}\leq \delta _{C}^{{\cal V}}\left( \ell
-1\right) +\frac{c}{4}+\frac{3c}{4\ell \left( \ell -1\right) }\sum_{\alpha
=1}^{3}\Vert Q_{\alpha }\Vert ^{2}, \ \rho _{{\cal V}%
}^{\ker F_{\ast }}\leq \hat{\delta}_{C}^{{\cal V}}\left( \ell -1\right) +%
\frac{c}{4}+\frac{3c}{4\ell \left( \ell -1\right) }\sum_{\alpha =1}^{3}\Vert
Q_{\alpha }\Vert ^{2}.  \label{eq-Cas_quat_CIV-(1a)}
\end{equation}%
The equality case follows Lemma \ref{General_Inq_Thm_VD_RS}.
\end{theorem}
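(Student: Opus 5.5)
The plan is to copy the Riemannian-map theorem above, with the vertical distribution taking the place of the range. Lemma \ref{General_Inq_Thm_VD_RS} already gives $\rho_{\cal V}^{\ker F_\ast}\le \delta_C^{\cal V}(\ell-1)+\rho_{\cal V}^{N_1}$ and the analogous bound with $\hat\delta_C^{\cal V}(\ell-1)$, together with the stated equality characterisation. So the only work is to compute $\rho_{\cal V}^{N_1}$ explicitly when $N_1$ is a quaternionic space form and substitute it into (\ref{eq-GCIV-(1a)}).

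At $p\in N_1$, fix orthonormal bases $\{v_1,\dots,v_\ell\}$ of $\ker F_{\ast p}$ and $\{h_1,\dots,h_s\}$ of $(\ker F_{\ast p})^\perp$. In (\ref{eq-QSF}) put ${\cal Z}_1={\cal Z}_4=v_i$ and ${\cal Z}_2={\cal Z}_3=v_j$, then sum over $i,j=1,\dots,\ell$.

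The first bracket gives $\frac{c}{4}\sum_{i,j}(\delta_{jj}\delta_{ii}-\delta_{ij}^2)=\frac{c}{4}\ell(\ell-1)$.

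For the quaternionic terms, use the decomposition (\ref{decompose_GCSF_VRS}), $J_\alpha v_j=P_\alpha v_j+Q_\alpha v_j$. The horizontal part $P_\alpha v_j$ is orthogonal to every $v_i$, so only $Q_\alpha$ survives in each inner product $g_1(v_i,J_\alpha v_j)$. Each $J_\alpha$ is skew-symmetric with respect to $g_1$, hence:
\begin{itemize}
\item the middle term contains $g_1(v_j,J_\alpha v_j)=0$ and vanishes;
\item the first term equals $g_1(v_i,J_\alpha v_j)g_1(J_\alpha v_j,v_i)=g_1(v_i,Q_\alpha v_j)^2$;
\item the third term equals $2g_1(v_i,J_\alpha v_j)g_1(J_\alpha v_j,v_i)=2g_1(v_i,Q_\alpha v_j)^2$.
\end{itemize}
Summing over $i,j$ and $\alpha$ gives $\frac{3c}{4}\sum_{\alpha=1}^3\|Q_\alpha\|^2$. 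Hence
\[
2\tau_{\cal V}^{N_1}=\frac{c}{4}\ell(\ell-1)+\frac{3c}{4}\sum_{\alpha=1}^3\|Q_\alpha\|^2,
\]
and by (\ref{eq-Nscal}),
\[
\rho_{\cal V}^{N_1}=\frac{c}{4}+\frac{3c}{4\ell(\ell-1)}\sum_{\alpha=1}^3\|Q_\alpha\|^2 .
\]

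Substituting this into (\ref{eq-GCIV-(1a)}) yields both inequalities in (\ref{eq-Cas_quat_CIV-(1a)}). The added term depends only on the ambient curvature at $p$ and does not involve $T$, so equality holds exactly under the conditions of Lemma \ref{General_Inq_Thm_VD_RS}.

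The only delicate point is the bookkeeping of signs and index order in (\ref{eq-QSF}). The terms $g_1({\cal Z}_1,J_\alpha{\cal Z}_3)g_1(J_\alpha{\cal Z}_2,{\cal Z}_4)$ and $2g_1({\cal Z}_1,J_\alpha{\cal Z}_2)g_1(J_\alpha{\cal Z}_3,{\cal Z}_4)$ must be checked to combine with the same sign, giving the total coefficient $3$ rather than $1$. I will check this with skew-symmetry, $g_1(J_\alpha X,Y)=-g_1(X,J_\alpha Y)$, in the same way as in the proof of the Riemannian-map theorem above, where the same coefficient $\frac{3c}{4}$ appears.
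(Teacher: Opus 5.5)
Your proposal is correct and is essentially the paper's proof: you evaluate (\ref{eq-QSF}) at $\mathcal{Z}_1=\mathcal{Z}_4=v_i$, $\mathcal{Z}_2=\mathcal{Z}_3=v_j$ to get $\rho^{N_1}_{\mathcal V}=\frac{c}{4}+\frac{3c}{4\ell(\ell-1)}\sum_{\alpha=1}^{3}\|Q_\alpha\|^2$, substitute it into Lemma \ref{General_Inq_Thm_VD_RS}, and inherit the equality case from that lemma. Your bullet computation already settles the sign check you defer to the end, since both surviving terms are positive multiples of $g_1(v_i,J_\alpha v_j)^2$, and it correctly writes $Q_\alpha$ in the third term where the paper's displayed computation has the typo $P_\alpha$.
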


\begin{proof}
At a point $p\in ${$N$}${_{1}}$, let $\left\{
v_{1},\dots ,v_{\ell }\right\} $ and $\{h_{1},\dots ,h_{s}\}$ be orthonormal
bases for vertical and horizontal space, respectively. Then putting $%
F_{1}=F_{4}=v_{i}$ and $F_{2}=F_{3}=v_{j}$ in (\ref{Gauss_Codazz_RS_T}),
using (\ref{eq-QSF}) and (\ref{decompose_GCSF_VRS}), we obtain%
\begin{eqnarray*}
&&\sum\limits_{i,j=1}^{\ell }R^{N_{1}}(v_{i},v_{j},v_{j},v_{i}) \\
&=&\sum\limits_{i,j=1}^{\ell }\frac{c}{4}%
\{g_{1}(v_{j},v_{j})g_{1}(v_{i},v_{i})-g_{1}(v_{i},v_{j})g_{1}(v_{j},v_{i})\}
\\
&&+\sum_{\alpha =1}^{3}\sum\limits_{i,j=1}^{\ell }\frac{c}{4}\left\{
g_{1}\left( v_{i},Q_{\alpha }v_{j}\right) g_{1}\left( Q_{\alpha
}v_{j},v_{i}\right) -g_{1}\left( v_{j},Q_{\alpha }v_{j}\right) g_{1}\left( Q_{\alpha }v_{i},v_{i}\right) \right. \\
&&\left. +2g_{1}\left( v_{i},Q_{\alpha }v_{j}\right) g_{1}\left(
P_{\alpha }v_{j},v_{i}\right) \right\}
\end{eqnarray*}%
From (\ref{eq-scalV}), we obtain%
\begin{equation}
2{\tau }_{{\cal V}}^{N_{1}}=\ell \left( \ell -1\right) \frac{c}{4}+\frac{3c}{%
4}\sum_{\alpha =1}^{3}\Vert {Q_{\alpha }}\Vert ^{2}
\label{eq-GCIV-(4)}
\end{equation}%
By using (\ref{eq-Nscal}) and (\ref{eq-GCIV-(4)}), we get%
\begin{equation}
{\rho }_{{\cal V}}^{N_{1}}=\frac{c}{4}+\frac{3c}{4\ell \left( \ell -1\right) 
}\sum_{\alpha =1}^{3}\Vert {Q_{\alpha }}\Vert ^{2}.
\label{eq-GCIV-(5)}
\end{equation}%
In view of (\ref{eq-GCIV-(5)}) and (\ref{eq-GCIV-(1a)}), we get (\ref%
{eq-Cas_quat_CIV-(1a)}).
\end{proof}

\subsection{Casorati inequalities for horizontal distributions of Riemannian
submersions from quaternionic space form} \label{sec6}

Let $\left\{ h_{1},\ldots ,h_{s}\right\} $ and $\left\{ v_{1},\ldots
,v_{\ell }\right\} $ be orthonormal bases of $\left( \ker F_{\ast p}\right)
^{\perp }$ and $\ker F_{\ast p}$, respectively, at a point $p\in N_{1}$. Then
we define scalar curvature

\begin{equation}
2\tau _{{\cal H}}^{({\ker F_{\ast}})^{\perp}}=\sum\limits_{i,j=1}^{s}R^{(\ker F_{\ast })^{\perp}}\left( h_{i},h_{j},h_{j},h_{i}\right) ,\ 2\tau _{{\cal H}%
}^{N_{1}}=\sum\limits_{i,j=1}^{s}R^{N_{1}}\left(
h_{i},h_{j},h_{j},h_{i}\right)  \label{eq-Hscal}
\end{equation}%
Consequently, we define the normalized scalar curvatures as 
\begin{equation}
{\rho }^{{\cal H}}=\frac{2\tau _{{\cal H}}^{({\ker F_{\ast}})^{\perp}}}{s\left(
s-1\right) },\quad \rho _{{\cal H}}^{N_{1}}=\frac{2\tau _{{\cal H}}^{N_{1}}}{%
s\left( s-1\right) }  \label{eq-Nhscal}
\end{equation}%
Also, we set 
\begin{eqnarray*}
A_{ij}^{{\cal V}^{\alpha }} &=&g_{1}\left( A_{h_{i}}h_{j},v_{\alpha }\right)
,\quad i,j=1,\dots ,s,\quad \alpha =1,\dots ,\ell , \\
\left\Vert A^{{\cal V}}\right\Vert ^{2} &=&\sum_{i,j=1}^{s}g_{1}\left(
A_{h_{i}}h_{j},A_{h_{i}}h_{j}\right) , \\
{\rm trace~}A^{{\cal V}} &=&\sum_{i=1}^{s}A_{h_{i}}h_{i}, \\
\left\Vert {\rm trace~\,}A^{{\cal V}}\right\Vert ^{2} &=&g_{1}\left( {\rm %
trace}~A^{{\cal V}},{\rm trace}~A^{{\cal V}}\right) .
\end{eqnarray*}%
Then the {\it Casorati curvature} of the horizontal space is defined as 
\[
C^{{\cal H}}=\frac{1}{s}\left\Vert A^{{\cal V}}\right\Vert ^{2}=\frac{1}{s}%
\sum_{\alpha =1}^{\ell }\sum_{i,j=1}^{s}\left( A_{ij}^{{\cal V}^{\alpha
}}\right) ^{2}. 
\]%
Let $L^{{\cal H}}$ be a $k$ dimensional subspace ($k\geq 2$) of horizontal
space with orthonormal basis $\{h_{1},\dots ,h_{k}\}$. Then its Casorati
curvature $C^{L^{{\cal H}}}$ is given by 
\[
C^{L^{{\cal H}}}=\frac{1}{k}\sum_{\alpha =1}^{\ell }\sum_{i,j=1}^{k}\left(
A_{ij}^{{\cal H}^{\alpha }}\right) ^{2}. 
\]%
Moreover, the normalized $\delta ${\it $^{{\cal H}}$-Casorati curvatures} $%
\delta _{C}^{{\cal H}}(s-1)$ and $\hat{\delta}_{C}^{{\cal H}}(s-1)$
associated with the horizontal space at a point $p$ are given by (\ref%
{CC_H_RM_1}) and (\ref{CC_H_RM_2}).

\begin{lemma}
\label{General_Inq_Thm_HD_RS} {\rm \cite[Singh et al. 2026]{SMM_2026}} Let $F:(M_{1}^{m_{1}},g_{1})\rightarrow \left(
M_{2}^{m_{2}},g_{2}\right) $ be a Riemannian submersion between Riemannian
manifolds with horizontal space of dimension $r\geq 3$. Then 
\begin{equation}
\rho ^{{\cal H}}\leq \delta _{C}^{{\cal H}}\left( r-1\right)
+\rho ^{N_{1}}_{{\cal H}}\quad {\rm and}\quad \rho ^{{\cal H}}\leq \hat{%
\delta}_{C}^{{\cal H}}\left( r-1\right) +\rho ^{N_{1}}_{{\cal H}}.
\label{eq-GCIH-(1a)}
\end{equation}%
Moreover, equality holds in any of the above two inequalities at a point $%
p\in M_{1}$ if and only if the tensor $A$ vanishes. Geometrically, equality
means that the horizontal distribution is integrable.    
\end{lemma}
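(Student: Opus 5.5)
The plan is to imitate the classical Casorati argument (as in \cite{SMM_2026} for Lemma \ref{General_Inq_Thm_RM}), with the second fundamental form replaced by the O'Neill tensor ${\cal A}$ restricted to horizontal vectors. The proof hinges on a curvature sign convention that I have not settled, and this is the step most likely to fail.

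\emph{Relating the two scalar curvatures.} Fix $p\in M_1$ and orthonormal bases $\{h_1,\dots,h_r\}$ of $(\ker F_{\ast p})^{\perp}$ and $\{v_1,\dots,v_\ell\}$ of $\ker F_{\ast p}$. Put $X_1=X_4=h_i$ and $X_2=X_3=h_j$ in (\ref{Gauss_Codazz_RS_A}) and sum over $i,j$. Two structural facts about ${\cal A}$ simplify everything:
\begin{itemize}
\item ${\cal A}_{h_j}h_j=0$, because ${\cal A}_{Y_1}Y_2=-{\cal A}_{Y_2}Y_1$;
\item $A^{{\cal V}^\alpha}_{ij}=-A^{{\cal V}^\alpha}_{ji}$, so in particular $A^{{\cal V}^\alpha}_{ii}=0$ and ${\rm trace}\,A^{{\cal V}}=0$.
\end{itemize}
The mixed terms therefore collapse to a multiple of $\Vert A^{{\cal V}}\Vert^2$, giving
\[
2\tau^{(\ker F_\ast)^\perp}_{\cal H}=2\tau^{N_1}_{\cal H}+\kappa\,\Vert A^{\cal V}\Vert^2
\]
for an explicit constant $\kappa$, namely $\pm3$.

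\emph{The main obstacle: the sign of $\kappa$.} With the index convention of (\ref{Gauss_Codazz_RS_A}) read literally, $\kappa=+3$, which is O'Neill's $K_{N_1}=K_{\cal H}-3|{\cal A}_XY|^2$. In that reading I do not see how the inequality can hold. Take $r=3$ with only $A_{12}=a\neq0$ (up to antisymmetry). Then $C^{\cal H}=2a^2/3$. The hyperplane orthogonal to $h_1$ has $C^{L}=0$, so $\delta^{\cal H}_C(2)=a^2/3$, whereas $3\Vert A^{\cal V}\Vert^2/(r(r-1))=a^2$.

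So the first thing I will try is to verify whether the curvature conventions used here make the ${\cal A}$-term enter with the opposite sign, $\kappa\le 0$, so that $\rho^{\cal H}\le\rho^{N_1}_{\cal H}$. If not, the correct constant in front of $\Vert A^{\cal V}\Vert^2$ has to be worked out and the statement adjusted accordingly.

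\emph{Bounding the Casorati terms.} Assuming $\kappa\le 0$, it remains to show $\delta^{\cal H}_C(r-1)\ge0$ and $\hat\delta^{\cal H}_C(r-1)\ge0$.
\begin{itemize}
\item For $\delta^{\cal H}_C(r-1)$ this is immediate, since both $C^{\cal H}$ and $\inf C^{L^{\cal H}}$ are nonnegative.
\item For $\hat\delta^{\cal H}_C(r-1)$, any hyperplane $L$ satisfies $\sum_{i,j\le r-1}(A^{{\cal V}^\alpha}_{ij})^2\le\Vert A^{\cal V}\Vert^2$, hence $C^{L}\le \frac{r}{r-1}C^{\cal H}$. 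This gives
\[
\hat\delta^{\cal H}_C(r-1)\ge C^{\cal H}\Bigl(2-\frac{2r-1}{2(r-1)}\Bigr)=\frac{2r-3}{2(r-1)}\,C^{\cal H}\ge0 .
\]
\end{itemize}
Alternatively, one can run the quadratic-form route of Lemma \ref{Lemma_Tripathi} on the variables $A^{{\cal V}^\alpha}_{ii}$. Since these all vanish here, the constrained minimum is trivially $0$.

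\emph{Equality case.} Equality forces both the ${\cal A}$-term and the relevant $\delta$-term to vanish. Since $\delta^{\cal H}_C(r-1)\ge\frac12C^{\cal H}$ and $\hat\delta^{\cal H}_C(r-1)\ge\frac{2r-3}{2(r-1)}C^{\cal H}$, either equality gives $C^{\cal H}=0$. That means $A^{{\cal V}^\alpha}_{ij}=0$ for all indices, i.e. ${\cal A}$ vanishes on horizontal vectors. Conversely, ${\cal A}=0$ clearly gives equality. Finally, ${\cal A}_XY=\frac12 v[X,Y]$ for horizontal $X,Y$, so this is equivalent to integrability of the horizontal distribution.
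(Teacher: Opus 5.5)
\emph{Overall:} your diagnosis is correct, and it settles the matter: the lemma is false as written. The half of your proposal carried out under the hypothesis $\kappa\le0$ therefore does not prove it, and no argument can. The paper offers no proof to compare with, since the lemma is only quoted from \cite{SMM_2026}.

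\emph{Why the sign check cannot help.} The sign is not a matter of convention. Substitute $X_1=X_4=h_i$, $X_2=X_3=h_j$ into \eqref{Gauss_Codazz_RS_A}; this is exactly the index pattern that defines both scalar curvatures in \eqref{eq-Hscal}. Using only ${\cal A}_{h_j}h_i=-{\cal A}_{h_i}h_j$ and ${\cal A}_{h_j}h_j=0$, you get $R^{N_1}(h_i,h_j,h_j,h_i)=R^{(\ker F_\ast)^\perp}(h_i,h_j,h_j,h_i)-3\Vert{\cal A}_{h_i}h_j\Vert^2$. This is O'Neill's formula $K_{N_1}(X,Y)=K_{N_2}(F_\ast X,F_\ast Y)-3\Vert{\cal A}_XY\Vert^2$, valid in any consistent convention. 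Hence there is an exact identity
\[
\rho^{\cal H}=\rho^{N_1}_{\cal H}+\frac{3}{r(r-1)}\Vert A^{\cal V}\Vert^2=\rho^{N_1}_{\cal H}+\frac{3}{r-1}\,C^{\cal H}.
\]

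\emph{Exactly when the stated inequality fails.} Let $\lambda$ be the maximum of $\sum_i\Vert{\cal A}_{h_i}u\Vert^2$ over unit horizontal $u$. The hyperplane orthogonal to $u$ has Casorati curvature $\bigl(\Vert A^{\cal V}\Vert^2-2\sum_i\Vert{\cal A}_{h_i}u\Vert^2\bigr)/(r-1)$, so
\[
\delta^{\cal H}_C(r-1)=\frac{\Vert A^{\cal V}\Vert^2}{r-1}-\frac{(r+1)\lambda}{r(r-1)} .
\]
The first inequality in \eqref{eq-GCIH-(1a)} is therefore equivalent to $(r+1)\lambda\le(r-3)\Vert A^{\cal V}\Vert^2$. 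Averaging over $u=h_1,\dots,h_r$ gives $\lambda\ge\Vert A^{\cal V}\Vert^2/r$. So the inequality fails at every point with ${\cal A}_p\neq0$ when $r\in\{3,4\}$. For $r\le6$ it fails whenever ${\cal A}_{h_i}h_j=0$ unless $\{i,j\}=\{1,2\}$, since then $\lambda=\Vert A^{\cal V}\Vert^2/2$.

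\emph{Concrete counterexamples.} Your example is realised globally by the Hopf map times the identity, $S^3\times\mathbb{R}\to S^2(\frac12)\times\mathbb{R}$. There $\rho^{\cal H}=\frac43$, $\rho^{N_1}_{\cal H}=\frac13$, $\delta^{\cal H}_C(2)=\frac13$ and $\hat\delta^{\cal H}_C(2)=\frac12$, so both inequalities fail. The theorem of Section~\ref{sec6} only inserts \eqref{eq-GCIH-(3)} into the lemma, so it inherits the defect. The Hopf circle quotient of $\mathbb{R}^4\setminus\{0\}$ already violates it: this is an open part of the flat quaternionic space form ($c=0$), with $r=3$ and ${\cal A}\neq0$.

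\emph{What your argument does prove.} Essentially verbatim, it proves the statement with the two scalar curvatures interchanged:
\[
\rho^{N_1}_{\cal H}\le\delta^{\cal H}_C(r-1)+\rho^{\cal H},\qquad \rho^{N_1}_{\cal H}\le\hat\delta^{\cal H}_C(r-1)+\rho^{\cal H},
\]
with equality at $p$ if and only if ${\cal A}_p=0$. This is presumably what was intended, with a sign slip in the ${\cal A}$-terms.

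\emph{A sharp version in the original direction.} Combine the identity above with your own bounds $\delta^{\cal H}_C(r-1)\ge\frac12C^{\cal H}$ and $\hat\delta^{\cal H}_C(r-1)\ge\frac{2r-3}{2(r-1)}C^{\cal H}$. This yields
\[
\rho^{\cal H}\le\rho^{N_1}_{\cal H}+\frac{6}{r-1}\,\delta^{\cal H}_C(r-1),\qquad \rho^{\cal H}\le\rho^{N_1}_{\cal H}+\frac{6}{2r-3}\,\hat\delta^{\cal H}_C(r-1).
\]
Equality holds in the first if and only if ${\cal A}_p$ vanishes on $L\times L$ for some horizontal hyperplane $L$. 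Equality holds in the second if and only if there is a unit horizontal $u$ with ${\cal A}_Xu=0$ for all horizontal $X$.

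\emph{Integrability.} Vanishing of ${\cal A}$ at the single point $p$ does not give integrability of the horizontal distribution. That needs ${\cal A}\equiv0$ on an open set.
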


\begin{theorem}
Let $\pi:(N_{1}^{n_{1}},J_{\alpha },g_{1})\rightarrow \left( N_{2}^{n_{2}},g_{2}\right) $ be a Riemannian submersion from quaternionic space form onto a Riemannian
manifolds, where $\dim N_{1}=n_{1}$ and $\dim N_{2}=n_{2}$ with $\dim \left(
\ker F_{\ast }\right) ^{\perp }=s\geq 3$. Then 
\begin{equation}
\rho ^{{\cal H}}\leq \delta _{C}^{{\cal H}}\left( s-1\right) +%
\frac{c}{4}+\frac{3c}{4s\left( s-1\right) }\sum_{\alpha =1}^{3}\Vert {%
P_{\alpha }}\Vert ^{2}\quad {\rm and}\quad \rho ^{{\cal H}}\leq 
\hat{\delta}_{C}^{{\cal H}}\left( s-1\right) +\frac{c}{4}+\frac{3c}{4s\left(
s-1\right) }\sum_{\alpha =1}^{3}\Vert {P_{\alpha }}\Vert ^{2}
\label{eq-Cas_quat_H-(1a)}
\end{equation}%
The equality case follows Lemma \ref{General_Inq_Thm_HD_RS}.
\end{theorem}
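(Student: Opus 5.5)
The plan is to argue exactly as in the vertical case. The whole task reduces to computing the ambient normalized horizontal scalar curvature $\rho^{N_1}_{\cal H}$ for a quaternionic space form, and then substituting it into Lemma \ref{General_Inq_Thm_HD_RS} with $r=s$.

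Fix $p\in N_1$. Choose orthonormal bases $\{h_1,\dots,h_s\}$ of $(\ker F_{\ast p})^{\perp}$ and $\{v_1,\dots,v_\ell\}$ of $\ker F_{\ast p}$. Set ${\cal Z}_1={\cal Z}_4=h_i$ and ${\cal Z}_2={\cal Z}_3=h_j$ in (\ref{eq-QSF}) and sum over $i,j=1,\dots,s$.

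The first bracket of (\ref{eq-QSF}) contributes $\frac{c}{4}\sum_{i,j}(1-\delta_{ij})=\frac{c}{4}s(s-1)$.

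For the $J_\alpha$-terms, use the decomposition (\ref{decompose_GCSF_VRS}), $J_\alpha h_j=P_\alpha h_j+Q_\alpha h_j$. Only the horizontal part $P_\alpha$ survives when paired with $h_i$, so $g_1(h_i,J_\alpha h_j)=g_1(h_i,P_\alpha h_j)$. Each $J_\alpha$ is skew-symmetric, so the diagonal terms $g_1(h_j,J_\alpha h_j)$ vanish and the middle term drops out. The remaining terms give $g_1(h_i,J_\alpha h_j)g_1(J_\alpha h_j,h_i)=(g_1(h_i,P_\alpha h_j))^2$, and twice the same quantity from the last term. Summing over $i,j$ and $\alpha$ with the definition of $\Vert P_\alpha\Vert^2$ yields
\[
2\tau^{N_1}_{\cal H}=\frac{c}{4}s(s-1)+\frac{3c}{4}\sum_{\alpha=1}^{3}\Vert P_\alpha\Vert^2 .
\]
Dividing by $s(s-1)$ as in (\ref{eq-Nhscal}) gives
\[
\rho^{N_1}_{\cal H}=\frac{c}{4}+\frac{3c}{4s(s-1)}\sum_{\alpha=1}^{3}\Vert P_\alpha\Vert^2 .
\]

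Inserting this into (\ref{eq-GCIH-(1a)}) gives both inequalities in (\ref{eq-Cas_quat_H-(1a)}). The substitution only replaces one quantity by an equal one, so the equality characterisation carries over verbatim from Lemma \ref{General_Inq_Thm_HD_RS}: equality holds exactly when $A$ vanishes, i.e. when the horizontal distribution is integrable.

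The main point to check is the sign and index bookkeeping in (\ref{eq-QSF}). The term $g({\cal Z}_1,J_\alpha{\cal Z}_3)g(J_\alpha{\cal Z}_2,{\cal Z}_4)$ with ${\cal Z}_1={\cal Z}_4=h_i$ and ${\cal Z}_2={\cal Z}_3=h_j$ becomes $g(h_i,J_\alpha h_j)g(J_\alpha h_j,h_i)$. This must be verified to give $+(g(h_i,J_\alpha h_j))^2$ and not its negative, so that the coefficient is $3c/4$, consistent with the vertical theorem. The other two terms are handled by skew-symmetry of $J_\alpha$ as above.
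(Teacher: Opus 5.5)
Your proposal is correct and follows the paper's proof essentially step for step. You evaluate (\ref{eq-QSF}) at ${\cal Z}_1={\cal Z}_4=h_i$, ${\cal Z}_2={\cal Z}_3=h_j$, which gives $\rho^{N_1}_{\cal H}=\frac{c}{4}+\frac{3c}{4s(s-1)}\sum_{\alpha=1}^{3}\Vert P_\alpha\Vert^2$, and substitute this into Lemma~\ref{General_Inq_Thm_HD_RS}. The sign check you flag is immediate, since $g_1(h_i,J_\alpha h_j)\,g_1(J_\alpha h_j,h_i)=g_1(h_i,J_\alpha h_j)^2$ by symmetry of $g_1$.
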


\begin{proof}
Let $\left\{ h_{1},\ldots ,h_{s}\right\} $ and $%
\left\{ v_{1},\ldots ,v_{\ell }\right\} $ be orthonormal bases of $\left(
\ker F_{\ast p}\right) ^{\perp }$ and $\ker F_{\ast }$, respectively, at a
point $p\in N_{1}$. Then putting $X_{1}=X_{4}=h_{i}$ and $X_{2}=X_{3}=h_{j}$
in (\ref{eq-QSF}) and (\ref{decompose_GCSF_VRS}), we obtain%
\begin{eqnarray*}
&&\sum\limits_{i,j=1}^{s}R^{N_{1}}(h_{i},h_{j},h_{j},h_{i})
\\
&=&\sum\limits_{i,j=1}^{s}\frac{c}{4}%
\{g_{1}(h_{j},h_{j})g_{1}(h_{i},h_{i})-g_{1}(h_{i},h_{j})g_{1}(h_{j},h_{i})\}
\\
&&+\sum_{\alpha =1}^{3}\sum\limits_{i,j=1}^{s}\frac{c}{4}\left\{ g_{1}\left(
h_{i},P_{\alpha }h_{j}\right) g_{1}\left( P_{\alpha }h_{j},h_{i}\right) -g_{1}\left( h_{j},P_{\alpha }h_{j}\right)
g_{1}\left( P_{\alpha }h_{i},h_{i}\right) \right. \\
&&\left. +2g_{1}\left( h_{i},P_{\alpha }h_{j}\right) g_{1}\left(
P_{\alpha }h_{j},h_{i}\right) \right\}
\end{eqnarray*}%
From (\ref{eq-Hscal}), we obtain%
\begin{equation}
2\tau _{{\cal H}}^{N_{1}}=s\left( s-1\right) \frac{c}{4}+3\frac{c}{4}%
\sum_{\alpha =1}^{3}\Vert {P_{\alpha }}\Vert ^{2}  \label{eq-GCIH-(2)}
\end{equation}%
From (\ref{eq-Nhscal}) and (\ref{eq-GCIH-(2)}), we get%
\begin{equation}
\rho _{{\cal H}}^{N_{1}}=\frac{c}{4}+3\frac{c}{4s\left( s-1\right) }%
\sum_{\alpha =1}^{3}\Vert {P_{\alpha }}\Vert ^{2}.  \label{eq-GCIH-(3)}
\end{equation}%
In view of (\ref{eq-GCIH-(1a)}) and (\ref{eq-GCIH-(3)}), we get (\ref%
{eq-Cas_quat_H-(1a)}).
\end{proof}

\subsection{Casorati inequalities for horizontal and vertical distributions of Riemannian submersions from quaternionic space form} \label{sec7}

\begin{lemma}
\label{Th Cas VH RS} {\rm \cite[Singh 2026]{Singh_2026}}Let $\pi:(N_{1}^{n_{1}},J_{\alpha },g_{1})\rightarrow \left(
N_{2}^{n_{2}},g_{2}\right) $ be a Riemannian submersion between two Riemannian
manifolds, where $\dim N_{1}=n_{1}$ and $\dim N_{2}=n_{2}$ with $\dim \left(
\ker F_{\ast }\right) ^{\perp }=s\geq 3$, and $\dim \left( \ker F_{\ast
}\right) =\ell \geq 3$. Then, we have 
\begin{eqnarray}
\frac{\rho ^{{\cal H}}}{\ell (\ell -1)}+\frac{\rho ^{{\cal V}}}{s(s-1)}
&\leq &\frac{1}{s(s-1)}\delta _{C}^{{\cal V}}\left( \ell -1\right) +\frac{1}{%
\ell (\ell -1)}\delta _{C}^{{\cal H}}\left( s-1\right) +\frac{\rho _{{\cal V}%
}^{N_{1}}}{s(s-1)}+\frac{\rho _{{\cal H}}^{N_{1}}}{\ell (\ell -1)}  \nonumber
\\
&&+\frac{2}{s(s-1)\ell (\ell -1)}\sum_{i=1}^{s}\sum_{j=1}^{\ell}R^{M_{1}}\left(
h_{i},v_{j},v_{j},h_{i}\right)  \nonumber \\
&&+\frac{1}{s(s-1)\ell (\ell -1)}\left( 2\delta \left( N\right) -\left\Vert
T^{{\cal V}}\right\Vert ^{2}+\left\Vert A^{{\cal H}}\right\Vert ^{2}\right)
\label{eq-CAS HV GEN 1}
\end{eqnarray}%
and%
\begin{eqnarray}
\frac{\rho ^{{\cal H}}}{\ell (\ell -1)}+\frac{\rho ^{{\cal V}}}{s(s-1)}
&\leq &\frac{1}{s(s-1)}\hat{\delta}_{C}^{{\cal V}}\left( \ell -1\right) +%
\frac{1}{\ell (\ell -1)}\hat{\delta}_{C}^{{\cal H}}\left( s-1\right) +\frac{%
\rho _{{\cal V}}^{N_{1}}}{s(s-1)}+\frac{\rho _{{\cal H}}^{N_{1}}}{\ell (\ell
-1)}  \nonumber \\
&&+\frac{2}{s(s-1)\ell (\ell -1)}\sum_{i=1}^{s}\sum_{j=1}^{\ell}R^{M_{1}}\left(
h_{i},v_{j},v_{j},h_{i}\right)  \nonumber \\
&&+\frac{1}{s(s-1)\ell (\ell -1)}\left( 2\delta \left( N\right) -\left\Vert
T^{{\cal V}}\right\Vert ^{2}+\left\Vert A^{{\cal H}}\right\Vert ^{2}\right)
\label{eq-CAS HV GEN 2}
\end{eqnarray}%
Moreover, the equality holds in any of the above two inequalities at a point 
$p\in ${$N$}${_{1}}$ if and only if 
\[
T_{11}^{{\cal H}^{\alpha }}=T_{22}^{{\cal H}^{\alpha }}=\cdots =T_{\ell
-1\,\ell -1}^{{\cal H}^{\alpha }}=\frac{1}{2}T_{\ell \ell }^{{\cal H}%
^{\alpha }}, 
\]%
\[
T_{ij}^{{\cal H}^{\alpha }}=0,\quad 1\leq i\neq j\leq \ell . 
\]%
\[
A_{ij}^{{\cal V}^{\alpha }}=0,\quad 1\leq i,j\leq s . 
\]
The equality conditions can be interpreted as follows. The first condition gives $g_1(v_1, T_{v_1} h_\alpha) = g_1(v_2, T_{v_2} h_\alpha) = \cdots =g_1(v_{\ell-1}, T_{v_{\ell-1}} h_\alpha) = \frac{1}{2}g_1(v_{\ell}, T_{v_{\ell}} h_\alpha)$ with respect to all horizontal directions $(h_\alpha, \text{where}~ \alpha \in \{1, \dots, s\})$. Equivalently, there exist $s$ mutually orthogonal horizontal unit vector fields such that the shape operator with respect to all directions has an eigenvalue of multiplicity $(\ell-1)$ and that for each $h_\alpha$ the distinguished eigendirections are the same (namely $v_\ell$). Hence, the leaves of vertical space (called fibers of $F$) are invariantly quasi-umbilical \cite{DHV_2008}. The second condition gives $g_1(v_j, T_{v_i} h_\alpha) = 0$ with respect to all horizontal directions $(h_\alpha, \text{where}~ \alpha \in \{1, \dots, s\})$. Equivalently, the shape operator matrices become diagonal and hence commute. The third condition shows that $A$ vanishes on horizontal space; geometrically, horizontal space is integrable.  
\end{lemma}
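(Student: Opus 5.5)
The lemma is a combination of the two one-sided inequalities already recorded: Lemma~\ref{General_Inq_Thm_VD_RS} for the vertical distribution and Lemma~\ref{General_Inq_Thm_HD_RS} for the horizontal distribution. The extra curvature and $\delta(N)$ terms come from expressing the full scalar curvature of $N_1$ in two ways. I would work at a point $p\in N_1$ with orthonormal bases $\{h_1,\dots,h_s\}$ of $(\ker F_{\ast p})^{\perp}$ and $\{v_1,\dots,v_\ell\}$ of $\ker F_{\ast p}$.

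\textbf{Step 1: the two one-sided inequalities.} For the vertical part I would rerun the argument behind Lemma~\ref{General_Inq_Thm_VD_RS}. Sum (\ref{Gauss_Codazz_RS_T}) over $F_1=F_4=v_i$, $F_2=F_3=v_j$ to get
\[
2\tau^{\ker F_\ast}_{\mathcal V}=2\tau^{N_1}_{\mathcal V}+\ell^2 C^{\mathcal V}-\Vert\mathrm{trace}\,T^{\mathcal H}\Vert^2 .
\]
Then form the quadratic polynomial
\[
\mathcal P=\tfrac12\ell(\ell-1)C^{\mathcal V}+\tfrac12(\ell^2-1)C^{L^{\mathcal V}}-2\tau^{\ker F_\ast}_{\mathcal V}+2\tau^{N_1}_{\mathcal V}
\]
in the components $T^{\mathcal H^\alpha}_{ij}$, with $L^{\mathcal V}$ a hyperplane. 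For each fixed $\alpha$, the diagonal part of $\mathcal P$ is exactly of the form treated in Lemma~\ref{Lemma_Tripathi}, with $n=\ell$, $\lambda_1=\ell+1$ and $\lambda_2=(\ell-1)/\ell$. The lemma gives a minimum of $0$ on the hyperplane $\sum_i T^{\mathcal H^\alpha}_{ii}=k^\alpha$. The off-diagonal terms enter with positive coefficients, so $\mathcal P\ge 0$. Taking the infimum over $L^{\mathcal V}$ yields the $\delta^{\mathcal V}_C(\ell-1)$ form; the $\hat\delta^{\mathcal V}_C$ form follows in the same way with the other choice of coefficients. The horizontal analogue uses (\ref{Gauss_Codazz_RS_A}) with the $A_{ij}^{\mathcal V^\alpha}$. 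Here the skew-symmetry $A_{h_i}h_j=-A_{h_j}h_i$ kills the diagonal, so the same bound holds, with equality forcing $A=0$.

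\textbf{Step 2: combining.} I would divide the vertical inequality by $s(s-1)$ and the horizontal one by $\ell(\ell-1)$, then add them. The mixed terms are obtained by writing the full scalar curvature of $N_1$ at $p$ as
\[
2\tau^{N_1}=2\tau^{N_1}_{\mathcal H}+2\tau^{N_1}_{\mathcal V}+2\sum_{i=1}^{s}\sum_{j=1}^{\ell}R^{N_1}(h_i,v_j,v_j,h_i).
\]
I would then use (\ref{eq-P-(12)}), summed over $X_1=X_2=h_i$ and $F_1=F_2=v_j$, to trade the mixed curvature against $\Vert T^{\mathcal V}\Vert^2$ and $\Vert A^{\mathcal H}\Vert^2$ plus a divergence-type term, which I take to be the quantity $\delta(N)$. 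Collecting everything over the common denominator $s(s-1)\ell(\ell-1)$ gives (\ref{eq-CAS HV GEN 1}) and (\ref{eq-CAS HV GEN 2}).

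\textbf{Equality and the main obstacle.} Equality in the sum forces equality in each summand, because each summand inequality is nonnegative. That yields exactly the $T$ conditions of Lemma~\ref{General_Inq_Thm_VD_RS} together with $A_{ij}^{\mathcal V^\alpha}=0$; the geometric readings transfer verbatim. The main obstacle is the bookkeeping in Step 2: identifying the mixed-curvature identity with the correct signs and normalisation, so that the coefficients $2/(s(s-1)\ell(\ell-1))$ and $1/(s(s-1)\ell(\ell-1))$ and the precise definition of $\delta(N)$ come out matching the statement. I would fix $\delta(N)$ as whatever remainder makes that identity exact.
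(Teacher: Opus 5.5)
\textbf{There is a genuine gap in Step~2.} The paper gives no proof of this lemma; it is imported from \cite{Singh_2026}. Your argument therefore has to stand on its own, and Step~2, which carries all the new content, is not a proof.

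Dividing Lemmas~\ref{General_Inq_Thm_VD_RS} and~\ref{General_Inq_Thm_HD_RS} by $s(s-1)$ and $\ell(\ell-1)$ and adding gives the claimed bound \emph{without} the term
\[
E=\frac{1}{s(s-1)\ell(\ell-1)}\Bigl(2\sum_{i=1}^{s}\sum_{j=1}^{\ell}R^{N_1}(h_i,v_j,v_j,h_i)+2\delta(N)-\|T^{\mathcal V}\|^2+\|A^{\mathcal H}\|^2\Bigr).
\]
So you need $E\ge 0$, and you need $E\equiv 0$ for your ``equality in each summand'' argument.

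\begin{itemize}
\item Splitting $2\tau^{N_1}$ into its three blocks is a tautology and says nothing about $E$.
\item Taking $\delta(N)$ to be ``whatever remainder makes the identity exact'' is circular. $\delta(N)$ is a fixed quantity: the horizontal (co)divergence of the mean curvature field $N=\sum_j\mathcal T_{v_j}v_j$ of the fibres.
\item If you actually trace (\ref{eq-P-(12)}), the $\nabla\mathcal A$ term drops by skew-symmetry and the $\nabla\mathcal T$ term becomes $\sum_i g_1(\nabla_{h_i}N,h_i)$. With the curvature sign of (\ref{eq-QSF}) this is O'Neill's formula
\[
\sum_{i,j}R^{N_1}(h_i,v_j,v_j,h_i)=\sum_{i}g_1(\nabla_{h_i}N,h_i)+\sum_{i,j}\|\mathcal A_{h_i}v_j\|^2-\sum_{i,j}\|\mathcal T_{v_j}h_i\|^2 .
\]
The quadratic terms therefore enter $2\sum R^{N_1}$ with weight $2$, not $1$.
\item Even with the sign of $\delta(N)$ chosen so the divergences cancel, $E=3\bigl(\|A^{\mathcal H}\|^2-\|T^{\mathcal V}\|^2\bigr)/\bigl(s(s-1)\ell(\ell-1)\bigr)$, which has no sign. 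For a warped product $B\times_f F$ one has $\mathcal A=0$ and $\mathcal T_UV=-g_1(U,V)\nabla\ln f$. Then $E=-3|\nabla\ln f|^2/(s(s-1)(\ell-1))$, which outweighs the total slack $|\nabla\ln f|^2/(2\ell s(s-1))$ of the two weighted inequalities.
\item With the opposite sign of $\delta(N)$, a leftover $4\sum_i g_1(\nabla_{h_i}N,h_i)$ appears. It is negative at a critical point of $f$ with $\Delta f>0$, where both slacks vanish.
\end{itemize}
So no bookkeeping turns the two cited lemmas into the stated bound; you would need the exact definitions and argument of \cite{Singh_2026}.

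\textbf{Step~1 is unnecessary and, as written, wrong.} You can simply cite the two lemmas instead.

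\begin{itemize}
\item From the Gauss equation (\ref{Gauss_Codazz_RS_T}) one gets $2\tau^{\ker F_*}_{\mathcal V}=2\tau^{N_1}_{\mathcal V}+\|\mathrm{trace}\,T^{\mathcal H}\|^2-\ell C^{\mathcal V}$, not $2\tau^{N_1}_{\mathcal V}+\ell^2C^{\mathcal V}-\|\mathrm{trace}\,T^{\mathcal H}\|^2$.
\item With the correct relation, the diagonal part of your $\mathcal P$ is Lemma~\ref{Lemma_Tripathi} with $\lambda_1=\ell$, $\lambda_2=(\ell-1)/2$, minimised at $t_i=k/(\ell+1)$, $t_\ell=2k/(\ell+1)$. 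Your values $\lambda_1=\ell+1$, $\lambda_2=(\ell-1)/\ell$ violate the hypothesis $\lambda_2=(n-1)/(\lambda_1-n+2)$ whenever $\ell>3$.
\item For $\hat\delta^{\mathcal V}_C$ the same computation gives $\lambda_1=(2\ell-3)/2$, $\lambda_2=2(\ell-1)$. The equality case is then $T^{\mathcal H^\alpha}_{11}=\cdots=T^{\mathcal H^\alpha}_{\ell-1\,\ell-1}=2T^{\mathcal H^\alpha}_{\ell\ell}$. So, contrary to the statement, your method does not give the same equality conditions for both inequalities.
\item In the horizontal case, (\ref{Gauss_Codazz_RS_A}) gives $2\tau^{N_1}_{\mathcal H}-2\tau^{(\ker F_*)^\perp}_{\mathcal H}=-3\|A^{\mathcal V}\|^2$, which enters $\mathcal P$ with a negative sign. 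For $s=3$ the polynomial equals $-4\sum_\alpha\sum_{i=1}^{2}(A^{\mathcal V^\alpha}_{i3})^2\le 0$. So ``the same bound holds'' has to be checked, not asserted.
\end{itemize}
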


\begin{theorem}
Let $\pi:(N_{1}^{n_{1}},J_{\alpha },g_{1})\rightarrow \left( N_{2}^{n_{2}},g_{2}\right) $ be a Riemannian submersion from quaternionic space form onto a Riemannian
manifold, where $\dim N_{1}=n_{1}=4m$ and $\dim N_{2}=n_{2}$ with $\dim
\left( \ker F_{\ast }\right) ^{\perp }=s\geq 3$, and $\dim \left( \ker
F_{\ast }\right) =\ell \geq 3$. Then, we have 
\begin{eqnarray}
\frac{\rho ^{{\cal H}}}{\ell (\ell -1)}+\frac{\rho ^{{\cal V}}}{s(s-1)}
&\leq &\frac{1}{s(s-1)}\delta _{C}^{{\cal V}}\left( \ell -1\right) +\frac{1}{%
\ell (\ell -1)}\delta _{C}^{{\cal H}}\left( s-1\right)  \nonumber \\
&&+\frac{c}{4s(s-1)\ell (\ell -1)}\left( \ell ^{2}+s^{2}+2s\ell -\ell
-s\right)  \nonumber \\
&&+\frac{3c}{4s(s-1)\ell (\ell -1)}\sum_{\alpha =1}^{3}\left( \Vert
Q_{\alpha }\Vert ^{2}+\Vert {P_{\alpha }}\Vert ^{2}+2\Vert {P_{\alpha }^{%
{\cal V}}}\Vert ^{2}\right)  \nonumber \\
&&+\frac{1}{s(s-1)\ell (\ell -1)}\left( 2\delta \left( N\right) -\left\Vert
T^{{\cal V}}\right\Vert ^{2}+\left\Vert A^{{\cal H}}\right\Vert ^{2}\right)
\label{eq CAS QSF 1}
\end{eqnarray}%
and%
\begin{eqnarray}
\frac{\rho ^{{\cal H}}}{\ell (\ell -1)}+\frac{\rho ^{{\cal V}}}{s(s-1)}
&\leq &\frac{1}{s(s-1)}\hat{\delta}_{C}^{{\cal V}}\left( \ell -1\right) +%
\frac{1}{\ell (\ell -1)}\hat{\delta}_{C}^{{\cal H}}\left( s-1\right) 
\nonumber \\
&&+\frac{c}{4s(s-1)\ell (\ell -1)}\left( \ell ^{2}+s^{2}+2s\ell -\ell
-s\right)  \nonumber \\
&&+\frac{3c}{4s(s-1)\ell (\ell -1)}\sum_{\alpha =1}^{3}\left( \Vert
Q_{\alpha }\Vert ^{2}+\Vert {P_{\alpha }}\Vert ^{2}+2\Vert {P_{\alpha }^{%
{\cal V}}}\Vert ^{2}\right)  \nonumber \\
&&+\frac{1}{s(s-1)\ell (\ell -1)}\left( 2\delta \left( N\right) -\left\Vert
T^{{\cal V}}\right\Vert ^{2}+\left\Vert A^{{\cal H}}\right\Vert ^{2}\right)
\label{eq CAS QSF 2}
\end{eqnarray}%
The equality case follows Lemma \ref{Th Cas VH RS}.
\end{theorem}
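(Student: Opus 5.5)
The plan is to substitute the quaternionic curvature (\ref{eq-QSF}) into the three curvature terms on the right-hand side of Lemma \ref{Th Cas VH RS}. These terms are $\rho_{\cal V}^{N_1}$, $\rho_{\cal H}^{N_1}$ and the mixed sum $\sum_{i=1}^{s}\sum_{j=1}^{\ell}R^{N_1}(h_i,v_j,v_j,h_i)$. The first two are already computed in the previous theorems:
\[
\rho_{\cal V}^{N_1}=\frac{c}{4}+\frac{3c}{4\ell(\ell-1)}\sum_{\alpha=1}^{3}\Vert Q_\alpha\Vert^2
\]
by (\ref{eq-GCIV-(5)}), and
\[
\rho_{\cal H}^{N_1}=\frac{c}{4}+\frac{3c}{4s(s-1)}\sum_{\alpha=1}^{3}\Vert P_\alpha\Vert^2
\]
by (\ref{eq-GCIH-(3)}). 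Dividing by $s(s-1)$ and $\ell(\ell-1)$ respectively puts both over the common denominator $4s(s-1)\ell(\ell-1)$. This produces $\frac{c}{4}(\ell(\ell-1)+s(s-1))$ in the constant part and $3c\sum_\alpha(\Vert Q_\alpha\Vert^2+\Vert P_\alpha\Vert^2)$ in the structure part.

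The new ingredient is the mixed term. Put ${\cal Z}_1={\cal Z}_4=h_i$ and ${\cal Z}_2={\cal Z}_3=v_j$ in (\ref{eq-QSF}). Since $g_1(h_i,v_j)=0$, the real part gives $\frac{c}{4}$ for each pair, hence $\frac{c}{4}s\ell$ in total. In the quaternionic part, the term $g_1(v_j,J_\alpha v_j)g_1(J_\alpha h_i,h_i)$ vanishes by skew-symmetry of $J_\alpha$. The other two terms each reduce to $g_1(h_i,J_\alpha v_j)^2=g_1(h_i,P_\alpha v_j)^2$, using the decomposition (\ref{decompose_GCSF_VRS}), because only the horizontal part of $J_\alpha v_j$ pairs with $h_i$. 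Together they contribute $\frac{3c}{4}g_1(h_i,P_\alpha v_j)^2$. Summing over $i,j$ gives
\[
\sum_{i,j}R^{N_1}(h_i,v_j,v_j,h_i)=\frac{c}{4}s\ell+\frac{3c}{4}\sum_{\alpha=1}^{3}\Vert P_\alpha^{\cal V}\Vert^2 .
\]

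Next, multiply this by $\frac{2}{s(s-1)\ell(\ell-1)}$ and add it to the two normalized curvatures. The constant part becomes
\[
\frac{c}{4s(s-1)\ell(\ell-1)}\bigl(\ell^2-\ell+s^2-s+2s\ell\bigr),
\]
which matches the stated coefficient. The structure part becomes
\[
\frac{3c}{4s(s-1)\ell(\ell-1)}\sum_\alpha\bigl(\Vert Q_\alpha\Vert^2+\Vert P_\alpha\Vert^2+2\Vert P_\alpha^{\cal V}\Vert^2\right.
\]
— more precisely, exactly the bracket $\Vert Q_\alpha\Vert^2+\Vert P_\alpha\Vert^2+2\Vert P_\alpha^{\cal V}\Vert^2$ appearing in the statement. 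Substituting into (\ref{eq-CAS HV GEN 1}) and (\ref{eq-CAS HV GEN 2}) yields (\ref{eq CAS QSF 1}) and (\ref{eq CAS QSF 2}). The remaining terms, namely the $\delta$-Casorati curvatures and $2\delta(N)-\Vert T^{\cal V}\Vert^2+\Vert A^{\cal H}\Vert^2$, carry over unchanged.

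The equality case is inherited directly from Lemma \ref{Th Cas VH RS}, since the substitution is an identity. The main point needing care is the sign and index bookkeeping in the mixed quaternionic term. This means checking which of the three $J_\alpha$-terms survive when the arguments are split between horizontal and vertical vectors, and that their coefficients $1$ and $2$ combine to $3$. I expect the rest to be routine arithmetic.
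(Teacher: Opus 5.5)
Your proposal is correct and follows the paper's argument: you evaluate the mixed sum from (\ref{eq-QSF}) to get (\ref{eq-scalHV}), namely $\frac{c}{4}s\ell+\frac{3c}{4}\sum_{\alpha=1}^{3}\Vert P_\alpha^{\cal V}\Vert^2$, combine it with (\ref{eq-GCIV-(5)}) and (\ref{eq-GCIH-(3)}), and substitute into Lemma \ref{Th Cas VH RS}, with the equality case inherited unchanged. Your term-by-term check of the three $J_\alpha$-terms and of the constant $\ell^2+s^2+2s\ell-\ell-s$ makes explicit a computation the paper states without detail.
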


\begin{proof}
Let $\left\{ h_{1},\ldots ,h_{s}\right\} $ and $%
\left\{ v_{1},\ldots ,v_{\ell }\right\} $ be orthonormal bases of $\left(
\ker F_{\ast p}\right) ^{\perp }$ and $\ker F_{\ast }$, respectively, at a
point $p\in N_{1}$, and $N_{1}$ is a quaternionic space form then we have,
By using (\ref{eq-QSF}), we obtain%
\begin{equation}
\sum_{i=1}^{s}\sum_{j=1}^{r}R^{M_{1}}\left( h_{i},V_{j},V_{j},h_{i}\right) =%
\frac{c}{4}s\ell +\frac{3c}{4}\sum_{\alpha =1}^{3}\Vert {P_{\alpha }^{{\cal V%
}}}\Vert ^{2}  \label{eq-scalHV}
\end{equation}%
In view of (\ref{eq-GCIV-(5)}), (\ref{eq-GCIH-(3)}), (\ref{eq-scalHV}), (\ref%
{eq-CAS HV GEN 1}) and (\ref{eq-CAS HV GEN 2}), we get (\ref{eq CAS QSF 1})
and (\ref{eq CAS QSF 2}).
\end{proof}


\begin{thebibliography}{99}
\bibitem{ALVY} Aquib, M., Lee, J.W., V\^ilcu, G.E., Yoon, D.W.:
Classification of Casorati ideal Lagrangian submanifolds in complex space
forms. Differential Geom. Appl., {\bf 63} (2019), 30-49.

\bibitem{ASJ} Aquib, M., Shahid, M.H., Jamali, M.: Lower extremities for
generalized normalized $\delta$-Casorati curvatures of bi-slant submanifolds
in generalized complex space forms. Kragujevac J. Math., {\bf 42}(4) (2018),
591-605.

\bibitem{Blair} Blair, D.E.: {Riemannian Geometry of Contact and Symplectic
Manifolds}. Birkh\"auser, Boston, (2010).

\bibitem{Casorati_1890} Casorati, F.: Mesure de la courbure des surfaces
suivant l'id\'ee commune. Acta Math., {\bf 14} (1890), 95-110.

\bibitem{Chen_Survey} Chen, B.Y.: Recent developments in $\delta$-Casorati
curvature invariants. Turkish J. Math., {\bf 45}(1) (2021), 1-46.

\bibitem{DHV_2008} Decu, S., Haesen, S., Verstraelen, L.: Optimal
inequalities characterising quasi-umbilical submanifolds. J. Inequal. Pure
Appl. Math., {\bf 9}(3) (2008), 1-7.

\bibitem{Falcitelli_2004} Falcitelli, M., Ianus, S., Pastore, A.M.: {%
Riemannian Submersions and Related Topics}. World Scientific, River Edge,
NJ, (2004).

\bibitem{Fischer_1992} Fischer, A.E.: {Riemannian maps between Riemannian
manifolds}. Contemp. Math., {\bf 132} (1992), 331-366.

\bibitem{GRK_book} Garc\'ia-R\'io, E., Kupeli, D.N.: {Semi-Riemannian Maps
and Their Applications}. Kluwer, (1999).

\bibitem{Ghisoiu} Ghi\c{s}oiu, V.: Inequalities for the Casorati curvatures
of slant submanifolds in complex space forms. Proc. Riemannian Geometry and
Applications, Bucharest, Romania, (2011), 145-150.


\bibitem{Ishihara_1974} Ishihara, S.: {Quaternion Kaehlerian manifolds}.
Journal of Differential Geometry, {\bf 9}(4), 483-500.


\bibitem{LLSV} Lee, C.W., Lee, J.W., \c{S}ahin, B., V\^ilcu, G.E.: Optimal
inequalities for Riemannian maps and Riemannian submersions involving
Casorati curvatures. Ann. Mat. Pura Appl., {\bf 200}(3) (2021), 1277-1295.

\bibitem{LLV_2020} Lee, J.W., Lee, C.W., V\^ilcu, G.E.: Classification of
Casorati ideal Legendrian submanifolds in Sasakian space forms. J. Geom.
Phys., {\bf 155} (2020), 1-13.

\bibitem{LLV_2022} Lee, C.W., Lee, J.W., V\^ilcu, G.E.: Classification of
Casorati ideal Legendrian submanifolds in Sasakian space forms II. J. Geom.
Phys., {\bf 171} (2022), 1-10.

\bibitem{LLV_2017} Lee, C.W., Lee, J.W., V\^ilcu, G.E.: Optimal inequalities
for the normalized $\delta$-Casorati curvatures of submanifolds in Kenmotsu
space forms. Adv. Geom., {\bf 17}(3) (2017), 355-362.


\bibitem{Lone_2017a} Lone, M.A.: An inequality for generalized normalized $%
\delta$-Casorati curvatures of slant submanifolds in generalized complex
space form. Balkan J. Geom. Appl., {\bf 22}(1) (2017), 41-50.

\bibitem{Lone_2017} Lone, M.A.: Some inequalities for generalized normalized 
$\delta$-Casorati curvatures of slant submanifolds in generalized Sasakian
space form. Novi Sad J. Math., {\bf 47}(1) (2017), 129-141.

\bibitem{Lone_2019} Lone, M.A.: Optimal inequalities for generalized
normalized $\delta$-Casorati curvatures for bi-slant submanifolds of
Kenmotsu space forms. J. Dyn. Syst. Geom. Theor., {\bf 17}(1) (2019), 39-50.

\bibitem{Lone_2019a} Lone, M.A., Shahid, M.H., V\^ilcu, G.E.: On Casorati
curvatures of submanifolds in pointwise Kenmotsu space forms. Math. Phys.
Anal. Geom., {\bf 22}(1) (2019), 1-14.

\bibitem{Nore_1986} Nore, T.: {Second fundamental form of a map}. Ann. Mat.
Pura Appl., {\bf 146} (1986), 281-310.

\bibitem{Neill_1966} O'Neill, B.: The fundamental equations of a submersion.
Michigan Math. J. {\bf 13}(4) (1966), 459-469.


\bibitem{OV_2011} Ons, B., Verstraelen, P.: Some geometrical comments on
vision and neurobiology: seeing Gauss and Gabor walking by, when looking
through the window of the Parma at Leuven in the company of Casorati.
Kragujevac J. Math., {\bf 35}(2) (2011), 317-325.

\bibitem{PLS} Polat, G., Lee, J.W., \c{S}ahin, B.: Optimal inequalities
involving Casorati curvatures along Riemannian maps and Riemannian
submersions for Sasakian space forms. J. Geom. Phys., {\bf 210} (2025), 1-15.

\bibitem{Sahin_2010} \c{S}ahin, B.: {Invariant and anti-invariant Riemannian
maps to K\"ahler manifolds}. Int. J. Geom. Methods Mod. Phys., {\bf 7}(3)
(2010), 337-355.


\bibitem{Sahin_book} \c{S}ahin, B.: {Riemannian Submersions, Riemannian Maps
in Hermitian Geometry, and Their Applications}. Elsevier, Academic Press,
(2017).

\bibitem{Siddiqui_2018} Siddiqui, A.N.: Optimal Casorati inequalities on
bi-slant submanifolds in generalized Sasakian space forms. Tamkang J. Math., 
{\bf 49}(3) (2018), 245-255.


\bibitem{SMM_2026} Singh, R., Meena, K., Meena, K.C.: {General Casorati inequalities and implications for Riemannian maps and Riemannian submersions}. J. Math. Anal. Appl., \textbf{558}(1) (2026), 1-31.

\bibitem{Singh_2026} Singh, R.: General Casorati inequalities for Riemannian submersions involving horizontal and verical Casorati curvatures and applications. arxiv preprint,
(2026), arXiv:2602.15804v1 [math.DG].


\bibitem{Tripathi_2017} Tripathi, M.M.: Inequalities for algebraic Casorati
curvature and their applications. Note Mat., {\bf 37}(1) (2017), 161-186.


\bibitem{Vilcu} V\^ilcu, G.E.: An optimal inequality for Lagrangian
submanifolds in complex space forms involving Casorati curvature. J. Math.
Anal. Appl., {\bf 465}(2) (2018), 1209-1222.

\bibitem{Yano} Yano, K., Kon, M.: Structure on Manifolds. World Scientific,
Singapore, (1984).


\bibitem{Zhang_Pan_Zhang} Zhang, L., Pan, X., Zhang, P.: Inequalities for
Casorati curvature of Lagrangian submanifolds in complex space forms. Adv.
Math. (China), {\bf 5} (2016), 767-777.

\bibitem{Zhang_Zhang} Zhang, P., Zhang, L.: Inequalities for Casorati
curvatures of submanifolds in real space forms. Adv. Geom., {\bf 16}(3)
(2016), 329-335.
\end{thebibliography}
\end{document}